\newtheorem{thm}{Theorem}[section]
\newtheorem{prop}[thm]{Proposition}
\newtheorem{lem}[thm]{Lemma}
\def\R{\mathbb{R}}
\def\N{\mathbb{N}}
\def\I{\infty}
\newcommand{\be}{\begin{equation}}
\newcommand{\ee}{\end{equation}}
\newcommand{\bea}{\begin{eqnarray}}
\newcommand{\eea}{\end{eqnarray}}
\newcommand{\beann}{\begin{eqnarray*}}
\newcommand{\eeann}{\end{eqnarray*}}
\newcommand{\benn}{\begin{equation*}}
\newcommand{\eenn}{\end{equation*}}
\def\ra{\rightarrow}
\def\I{\infty}
\newcommand{\cB}{{\mathcal B}}  % calligraphic B
\newcommand{\cC}{{\mathcal C}}  % calligraphic C
\newcommand{\cH}{{\mathcal H}}  % calligraphic H
\newcommand{\cM}{{\mathcal M}}  % calligraphic M
\newcommand{\cN}{{\mathcal N}}  % calligraphic N
\newcommand{\cO}{{\mathcal O}}  % calligraphic O
\newcommand{\cS}{{\mathcal S}}  % calligraphic S
\begin{document}

\author{Christian Kuehn\thanks{Institute for Analysis and Scientific Computing, Vienna University of Technology, Vienna, 1040, Austria}}
 
\title{Normal Hyperbolicity and Unbounded Critical Manifolds}

\maketitle

\begin{abstract}
This work is motivated by mathematical questions arising in differential equation models for autocatalytic reactions. In particular, this paper answers an open question posed by Guckenheimer and Scheper [SIAM J. Appl. Dyn. Syst. 10-1 (2011), pp. 92-128] and provides a more general theoretical approach to parts of the work by Gucwa and Szmolyan [Discr. Cont. Dyn. Sys.-S. 2-4 (2009), pp. 783-806]. We extend the local theory of singularities in fast-slow polynomial vector fields to classes of unbounded manifolds which lose normal hyperbolicity due to an alignment of the tangent and normal bundles. A projective transformation is used to localize the unbounded problem. Then the blow-up method is employed to characterize the loss of normal hyperbolicity for the transformed slow manifolds. Our analysis yields a rigorous scaling law for all unbounded manifolds which exhibit a power-law decay for the alignment with a fast subsystem domain. Furthermore, the proof also provides a technical extension of the blow-up 
method itself by augmenting the analysis with an optimality criterion for the blow-up exponents.
\end{abstract}

{\bf Keywords:} normal hyperbolicity, geometric singular perturbation theory, unbounded slow manifolds, blow-up method.\\

\section{Introduction}
\label{sec:introduction} 

The motivation of this work are several models of singularly perturbed differential equations arising in applications.
In \cite{MerkinNeedhamScott} Merkin et {al.} propose to study a prototypical autocatalytic system given by four reactions 
\be
\label{eq:reaction_2D}
P\ra Y,\quad Y\ra X,\quad  Y+2X\ra 3X, \quad X\ra Z
\ee 
where $X,Y$ are the two main reactants, $P$ is a constant 'pool'-chemical and $Z$ is the product. Then it can be shown, using standard mass-action kinetics and non-dimensionalization \cite{PetrovScottShowalter}, that \eqref{eq:reaction_2D} leads to a two-dimensional system of ordinary differential equations (ODEs) given by
\be
\label{eq:autocatalator_2D}
\begin{array}{rcl}
\epsilon\dot{x}&=& yx^2+y-x,\\
        \dot{y}&=& \mu-yx^2-y,\\
\end{array}
\ee 
where $\dot{~}$ denotes derivative with respect to time, $x,y$ are dimensionless concentrations associated to $X,Y$ respectively and $\epsilon,\mu$ are parameters; we note that the parameter $\epsilon$ is defined by the ratio of reaction rates for $Y\ra X$ and $X\ra C$ \cite[p.6192]{PetrovScottShowalter}. Obviously it is natural to assume that the concentrations are non-negative $x,y\in \R^+_0:=\{w\in\R:w\geq 0\}$. Furthermore, note carefully that the nonlinear term arises due to the autocatalytic reaction part $Y+2X\ra 3X$. It has been proven in \cite{GucwaSzmolyan} that the 2D-autocatalator \eqref{eq:autocatalator_2D} can exhibit an attracting relaxation-oscillation periodic orbit for certain ranges of the parameters; see also Figure \ref{fig:1}(a). In \cite{PetrovScottShowalter} Petrov {et al.}~generalized \eqref{eq:reaction_2D} by including a further reactant
\be
\label{eq:reaction_3D}
P\ra Y,\quad  P+Z\ra Y+Z,\quad Y\ra X,\quad X\ra Z,\quad  Y+2X\ra 3X,\quad  Z\ra W.
\ee
As before, it is straightforward \cite{PetrovScottShowalter} to derive from \eqref{eq:reaction_3D} the ODEs  
\be
\label{eq:autocatalator_3D}
\begin{array}{rcl}
\epsilon\dot{x}&=& yx^2+y-x,\\
        \dot{y}&=& \mu(\kappa+z)-yx^2-y,\\
		    \dot{z}&=& x-z,\\
\end{array}
\ee 
where $(\mu,\kappa,\epsilon)$ are parameters and $(x,y,z)\in(\R^+_0)^3$ are the concentrations. Numerical studies \cite{GuckenheimerScheper,MilikSzmolyan,MilikSzmolyanLoeffelmannGroeller} have shown that periodic, mixed-mode and chaotic oscillations exist for the 3D autocatalator \eqref{eq:autocatalator_3D}.\medskip

\begin{figure}[htbp]
\psfrag{x}{$x$}
\psfrag{y}{$y$}
\psfrag{C0}{$\cC_0$}
\psfrag{a}{(a)}
\psfrag{b}{(b)}
	\centering
		\includegraphics[width=0.75\textwidth]{./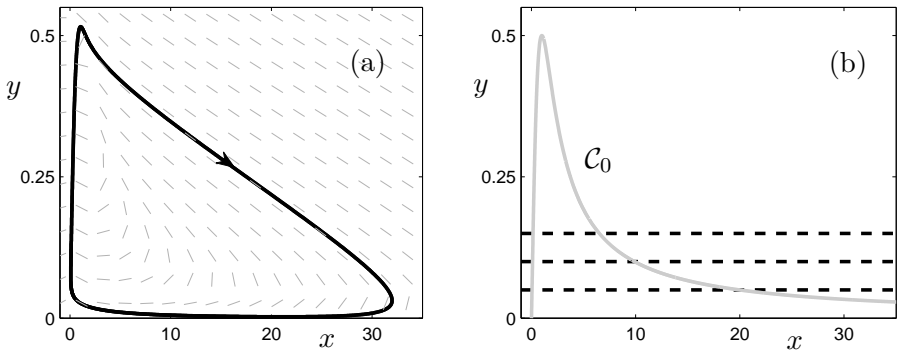}
	\caption{\label{fig:1}(a) Numerical illustration for the 2D autocatalator \ref{eq:autocatalator_2D} with $\mu=1.1$ and $\epsilon=0.01$. The thin line segments show the vector field. The attracting periodic relaxation-type periodic orbit (thick black curve) is also shown. (b) The critical manifold $\cC_0$ (gray) starts to align with the fast subsystem domains $\{y=\text{const.}\}$ (dashed) as $x\ra \I$.}	
\end{figure}

For both models \eqref{eq:autocatalator_2D}, \eqref{eq:autocatalator_3D} it is often assumed that the ratio of time scales $\epsilon>0$ is a sufficiently small parameter; we shall also denote this assumption by $0<\epsilon\ll1$. In this case it follows that both autocatalator models are fast-slow (or singularly perturbed) ODEs. We use the notation $p$ to denote a point in the phase space of concentrations for the autocatalator models {i.e.}~$p=(x,y)\in (\R_0^+)^2$ or $p=(x,y,z)\in(\R_0^+)^3$. The $x$-nullcline, or critical manifold, for both models is given by
\benn
\cC_0=\{p:yx^2+y-x=0\}=\left\{p:y=\frac{x}{1+x^2}\right\}.
\eenn
Note that $\cC_0$ is an unbounded smooth manifold. The angle between the tangent spaces $T_{p}\cC_0$ and the hyperplanes $\{p:y=\text{const.}\}$ decays to zero as $x\ra +\I$; see Figure \ref{fig:1}(b). This alignment, which is expected to imply a loss of normal hyperbolicity in the system, already causes substantial difficulties in the rigorous analysis of the dynamics of the (2D) autocatalator model \cite{GucwaSzmolyan}. The global return mechanism induced by the unbounded part of $\cC_0$ also plays a key role for the complex oscillatory patterns of the 3D autocatalator \cite[{p.~98}]{GuckenheimerScheper} which have been observed in numerical simulations.\medskip 

In a completely different context a model with similar properties to the autocatalator was proposed by Rankin et {al.}~\cite{RankinDesrochesKrauskopfLowenberg} as a caricature system for effects in aircraft ground dynamics 
\be
\label{eq:Rankin}
\left\{
\begin{array}{rcl}
\epsilon\dot{x}&=& y+(x-\mu)\exp\left(\frac{x}{\kappa}\right),\\
        \dot{y}&=& \nu-x,\\
\end{array}\right.
\ee
where $(\mu,\kappa,\nu,\epsilon)$ are parameters, $0<\epsilon\ll1$ and $\cC_0=\{y=(\mu-x)\exp\left(x/\kappa\right)\}$ also aligns with $\{y=0\}$ as $x\ra -\I$ if $\kappa>0$ or as $x\ra+\I$ if $\kappa<0$.\medskip

Motivated by these examples, it is desirable to build a general theory of fast-slow systems with unbounded critical manifolds. The main contributions of this work are:

\begin{itemize}
 \item We study a general class of critical manifolds which may have an arbitrary power-law decay for the alignment with a fast subsystem domain. This includes the autocatalytic critical manifolds as special cases and answers open questions arising from various numerical studies.
 \item Using the blow-up method we give a rigorous proof when normal hyperbolicity for a perturbation of the critical manifold fails. The relevant scaling law turns out to be given by 
\benn
(x,y)=\left(\cO(\epsilon^{-1/(s+1)}),\cO(\epsilon^{s/(s+1)})\right),\qquad  \text{as $\epsilon\ra 0$}
\eenn
where $s$ is a power-law exponent computable from the critical manifold. This scaling has immediate consequences for the asymptotics of oscillatory patterns.
 \item On a technical level we contribute to a further development of the blow-up method by augmenting it with an 'optimality-criterion' of blow-up coefficients which have to be chosen in the analysis.
\end{itemize}\medskip

The paper is structured as follows: In Section \ref{sec:background} we introduce the notation and the required background from fast-slow systems. In Section \ref{sec:asymptotics} a formal asymptotic argument is given to illustrate the important scaling properties and the system is localized via a projective transformation. The local system is desingularized via blow-up in Section \ref{sec:blowup}. The charts and transition functions between the two relevant charts are calculated. The dynamics in each chart are analyzed in Section \ref{sec:blowup1} and \ref{sec:blowup2} respectively. The optimality criterion for the blow-up coefficients is proven in Section \ref{sec:optimality}. The main scaling result is summarized in Section \ref{sec:main} and its implications are discussed.     

\section{Background and Notation}
\label{sec:background}

In this paper we restrict our attention to the analysis of family of planar fast-slow systems given by
\be
\label{eq:base_model}
\begin{array}{rcrcr}
\frac{dx}{dt}&=&x'&=& f(x,y),\\
\frac{dy}{dt}&=&y'&=& \epsilon g(x,y),\\
\end{array}
\ee
where $(x,y)\in\R^2$, $0<\epsilon\ll1$ and $f:\R^2\ra\R$, $g:\R^2\ra\R$ are assumed to be sufficiently smooth.\medskip

\textit{Remark:} By restricting to planar systems the (3D) autocatalator does not immediately fit within the theory we develop. However, we expect that center manifold techniques, similar to the generalization for folded singularities from three to arbitrary finite-dimensional fast-slow systems \cite{Wechselberger1}, can be applied.\medskip

We are going to choose particular forms of $f,g$ below but introduce some general terminology beforehand; for more detailed reviews/introductions to fast-slow systems see \cite{Jones,Kaper,Desrochesetal}. The critical set of \eqref{eq:base_model} is given by
\benn
\cC_0:=\left\{(x,y)\in\R^2:f(x,y)=0\right\}.
\eenn
We are going to assume that $\cC_0$ defines a smooth manifold. $\cC_0$ is called normally hyperbolic at a point $(x^*,y^*)\in\cC_0$ if $\partial_xf(x^*,y^*)\neq 0$. We will assume that $\cC_0$ is normally hyperbolic at every point in $\R^2$ but $\cC_0$ will be unbounded. Let us point out that there has been a lot of work on the geometric theory of planar fast-slow systems recently by De Maesschalck and Dumortier, see {e.g.} \cite{DeMaesschalckDumortier5,DeMaesschalckDumortier6,DeMaesschalckDumortier2}, particularly in the context of local singularities, periodic orbits and Li\'{e}nard systems.\medskip

\textit{Remark:} The general definition of normal hyperbolicity of an invariant manifold \cite{HirschPughShub,Fenichel1,WigginsIM} requires the splitting of tangent and normal dynamics. Recall that a compact manifold $\cM\subset \R^N$ is normally hyperbolic for a vector field $F:\R^N\ra\R^N$ if there exists a continuous splitting of the tangent bundle
\benn
T_{\cM}\R^n=N^u\oplus T\cM\oplus N^s 
\eenn
where the linearization $DF$ expands $N^u$ and contracts $N^s$ more sharply than $T\cM$. In $\R^2$, we obviously just need one of the normal bundles if $\dim(\cM)=1$. It is important to point out that we do restrict ourselves to perturbations of normally hyperbolic manifolds in the class of fast-slow vector fields of the form \eqref{eq:base_model} being interested in loss of normal hyperbolicity of invariant manifolds within this class of perturbed vector fields where the perturbation is given by $\epsilon g(x,y)$.\medskip

The implicit function theorem yields that $\cC_0$ is locally a graph. In our case, the parametrization will be global so that 
\benn
\cC_0=\{(x,y)\in\R^2:x=h(y)\}.
\eenn
for some smooth function $h:\R\ra \R$. The singular limit $\epsilon\ra0$ of \eqref{eq:base_model} defines a differential algebraic equation, also called the fast subsystem 
\be
\label{eq:base_model_fss}
\begin{array}{rcl}
x'&=& f(x,y),\\
y'&=& 0.\\
\end{array}
\ee
Changing to the slow time scale $\tau=\epsilon t$ in \eqref{eq:base_model} and taking the singular limit $\epsilon=0$ yields the slow subsystem defined on $\cC_0$
\be
\label{eq:base_model_ss}
\begin{array}{rcrcl}
0&=&0&=& f(x,y),\\
\frac{dy}{d\tau}&=&\dot{y}&=& g(x,y).\\
\end{array}
\ee
The slow subsystem can be written as $\dot{y}=g(h(y),y)$. If $h$ is invertible, the critical manifold can also be defined via $\cC_0=\{(x,y)\in \R^2:y=h^{-1}(x)=:c(x)\}$. Implicit differentiation with respect to $\tau$ yields $\dot{y}=c'(x)\dot{x}$ so that the slow subsystem is 
\benn
c'(x)\dot{x}= g(x,c(x)).
\eenn
Fenichel's Theorem \cite{Fenichel4,Jones} implies that that any compact submanifold $\cM_0\subset \cC_0$ persists as a slow manifold $\cM_\epsilon$ for $0<\epsilon\ll1$. Furthermore, $\cM_\epsilon$ is locally invariant and diffeomorphic to $\cM_0$. The flow on $\cM_\epsilon$ converges to the slow flow as $\epsilon\ra0$. For any fixed small $\epsilon>0$ we can define $\cM_\epsilon$ and then extend it under the flow of \eqref{eq:base_model} but this extension may no longer normally hyperbolic as an invariant manifold for the full system \eqref{eq:base_model}. To understand this effect we propose to study the family of model systems given by 
\be
\label{eq:poly_models}
f(x,y,\epsilon)=1-x^sy\qquad \text{for $s\in\N$}
\ee
and $g(x,y)=\mu$ for some $\mu\neq0$. The choice for the fast vector field is motivated by the asymptotic expansion of $\cC_0$ for the autocatalator models \eqref{eq:autocatalator_2D},\eqref{eq:autocatalator_3D} as $x\ra\I$. In fact, returning to the chemical interpretation the general model class corresponds to autocatalytic reaction steps of the form $Y+(s+1)X\ra (s+2)X$.\medskip 

The smooth unbounded critical manifold for \eqref{eq:poly_models} is given by
\be
\label{eq:C0_mod_class}
\cC_0:=\left\{(x,y)\in\R^2:y=\frac{1}{x^s}=c(x)\right\}.
\ee
As $|x|\ra\I$ the tangent space $T_{(x,y)}\cC_0$ starts to align with the $x$-axis since $c(x)\ra0$ and $c'(x)\ra0$ as $|x|\ra\I$. By reflection symmetry we restrict our focus to the positive part $\cC_0\cap(\R^+_0)^2$ from now on {i.e.}~the intersection with the non-negative quadrant will be understood. Note that the model class \eqref{eq:poly_models} covers already a wide variety of power-law decay rates. Since we are only interested in the unbounded part of $\cC_0$ we shall restrict the dynamics to 
\benn
\cH^\sigma:=\{(x,y)\in\R^2:x>\sigma\}
\eenn
for a suitable fixed $\sigma>0$ with $\sigma=\cO(1)$ as $\epsilon\ra 0$; see also Figure \ref{fig:2}(a). Since $\partial_xf(x^*,y^*)=-s(x^*)^{s-1}y^*\neq 0$ for $(x^*,y^*)\in\cC_0$ it follows that any compact submanifold $\cM_0\subset\cC_0$ is normally hyperbolic and perturbs, by Fenichel's Theorem, to a normally hyperbolic slow manifold for $\epsilon>0$ sufficiently small. However, for fixed small $\epsilon>0$ we do not know yet when the extension of $\cM_\epsilon$ under the flow starts to deviate from the critical manifold approximation. 

\section{Asymptotic Expansion and Projective Transformation}
\label{sec:asymptotics}

A good intuitive understanding of the problem is gained by considering one of the simplest possible systems in the model class \eqref{eq:poly_models} given by $s=1$ and $\mu=1$ so that
\be
\label{eq:example}
\begin{array}{rcl}
\epsilon \dot{x}&=& 1-yx,\\
         \dot{y}&=& 1.\\
\end{array}
\ee
From the viewpoint of formal asymptotics we can simply calculate a slow manifold $\cM_\epsilon$ by making the ansatz
\be
\label{eq:sm_ansatz}
x(y)=x_0(y)+\epsilon x_1(y)+\epsilon^2 x_2(y)+\cdots.
\ee
Inserting \eqref{eq:sm_ansatz} into \eqref{eq:example} and collecting terms of different orders in $\epsilon$ yields
\be
\label{eq:disorder}
\begin{array}{rrcl}
\cO(1):\qquad & 0&=&1-yx_0(y),\\
\cO(\epsilon^k):\qquad & \frac{dx_{k-1}}{dy}&=&-yx_k(y),\\
\end{array}
\ee 
for $k\in\N$. A direct calculation using \eqref{eq:disorder} gives the formal asymptotics of the slow manifold
\be
\label{eq:asy_series}
x(y)=\frac1y-\sum_{k=1}^K(2k-1)\frac{\epsilon^k}{y^{2k+1}}+\cO(\epsilon^{K+1})\qquad \text{as $\epsilon\ra0$}.
\ee
The series \eqref{eq:asy_series} fails to be asymptotic when $1/y\sim \epsilon^k/y^{2k+1}$ as $\epsilon\ra0$ or $y=\cO(\sqrt\epsilon)$. Since the critical manifold of \eqref{eq:example} is given as the graph of $y=1/x$ we expect that normal hyperbolicity of the associated slow manifold breaks down for $x=\cO(\epsilon^{-1/2})$. The example \eqref{eq:example} suggests that, depending on the asymptotics of the critical manifold as $x\ra \I$, there is an $\epsilon$-dependent scaling law that governs within which regime we can use the critical manifold approximation provided by Fenichel's Theorem. To give a rigorous proof of the asymptotic scaling we would like to desingularize the problem. In \cite{GucwaSzmolyan} the autocatalator is considered and phase space is re-scaled by an $\epsilon$-dependent transformation; see also further remarks in Section \ref{sec:main}. Here we use a different approach by first completely localizing the problem; see also \cite{Wechselberger3}. Consider a projective coordinate transformation 
\be
\label{eq:projective}
\rho:\cH^1\ra(\R^2-\cH^1)\qquad \rho(x,y)=\left(\frac{1}{v},y\right)
\ee
where $\sigma=1$ is chosen for computational convenience for the domain $\cH^\sigma$. All the following results are easily checked to be independent of a fixed chosen $\sigma>0$ independent of $\epsilon$; see also Figure \ref{fig:2}(b). A direct calculation yields the following result:\medskip 

\begin{lem}
The map \eqref{eq:projective} yields a transformed vector field defined on $(\R^2-\cH^1)$ and given by
\be
\label{eq:main_local}
\begin{array}{rcl}
\epsilon\dot{v}&=& \left(yv^{2-s}-v^{2}\right),\\
\dot{y}&=& \mu. 
\end{array}
\ee
\end{lem}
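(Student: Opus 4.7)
The statement is essentially a change-of-variables computation, so the plan is a direct application of the chain rule with a short check of the domain.

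First, I would note that the projective map only affects the first coordinate: $v = 1/x$, while $y$ is untouched. Hence the $y$-equation $\dot y = \mu$ is preserved verbatim, and I only need to derive the $\dot v$-equation. Differentiating $x = 1/v$ in $t$ gives $\dot x = -\dot v / v^2$, so multiplying by $\epsilon$ yields
\begin{equation*}
\epsilon \dot x = -\frac{\epsilon \dot v}{v^2}.
\end{equation*}
On the other hand, substituting $x = 1/v$ into $f(x,y,\epsilon) = 1 - x^s y$ gives $f(1/v, y, \epsilon) = 1 - y v^{-s}$. Equating the two expressions for $\epsilon \dot x$ and solving for $\epsilon \dot v$ yields
\begin{equation*}
\epsilon \dot v = -v^2 \left(1 - y v^{-s}\right) = y v^{2-s} - v^2,
\end{equation*}
which is exactly the first component of \eqref{eq:main_local}.

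For the domain statement, I would verify that $\rho$ is a diffeomorphism from $\cH^1 = \{x > 1\}$ (intersected with the relevant half-plane) onto its image: since $v = 1/x$ is smooth and strictly monotone on $\{x > 1\}$, its image is $\{0 < v < 1\}$, which lies in $\R^2 \setminus \cH^1$ (interpreted with $v$ as the first coordinate on the target), and the inverse $x = 1/v$ is smooth there. This justifies pushing the vector field forward by $\rho$ and yields the claimed form globally on the target set.

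The computation has no real obstacle; the only subtle point is purely notational, namely keeping straight that $v$ denotes the new coordinate replacing $x$ and that the singular parameter $\epsilon$ passes through the transformation unchanged (so \eqref{eq:main_local} remains a slow-fast system with the same time scale separation as \eqref{eq:base_model}). The payoff of the lemma is that the behavior near $x = \infty$ in the original coordinates is now encoded in the behavior near $v = 0$ in the transformed coordinates, which is what enables the subsequent blow-up analysis at the origin.
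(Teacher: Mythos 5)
Your computation is correct and is precisely the ``direct calculation'' the paper itself invokes (it gives no further detail): differentiate $x=1/v$, substitute into $\epsilon\dot x=1-x^sy$, and solve for $\epsilon\dot v$, with the $y$-equation unchanged. Your added remark on the domain ($\{x>1\}\mapsto\{0<v<1\}\subset\R^2-\cH^1$ diffeomorphically) is a harmless and correct supplement to what the paper leaves implicit.
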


\begin{figure}[htbp]
\psfrag{x}{$x$}
\psfrag{y}{$y$}
\psfrag{s=1}{$s=1$}
\psfrag{s=2}{$s=2$}
\psfrag{rho}{$\rho$}
\psfrag{v}{$v$}
\psfrag{H2}{$\R^2-\cH^1$}
\psfrag{H1}{$\cH^1$}
\psfrag{a}{(a)}
\psfrag{b}{(b)}
	\centering
		\includegraphics[width=0.75\textwidth]{./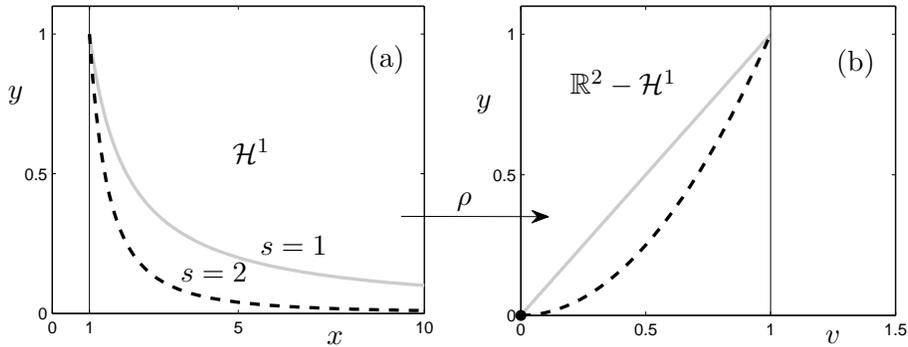}
	\caption{\label{fig:2}(a) Critical manifolds \eqref{eq:C0_mod_class} for $s=1$ (solid gray) and $s=2$ (dashed black). (b) Critical manifolds after the transformation \eqref{eq:projective}. Recall that we always restrict the dynamics to the non-negative quadrant $(\R^+_0)^2$.}	
\end{figure}

The vector field \eqref{eq:main_local} can be desingularized, similar to the desingularization for folded singularities \cite{SzmolyanWechselberger1}, via multiplication by $v^s$ and a time re-scaling which leads to 
\be
\label{eq:main_local1}
\begin{array}{rcl}
\epsilon\dot{v}&=& v^2\left(y-v^{s}\right),\\
\dot{y}&=& \mu v^{s}. 
\end{array}
\ee
Note that the transformation reverses time on orbits for $v<0$ if $s$ is odd but that we always restrict to the non-negative quadrant which implies that we do not have to consider this issue here. After the transformation the original critical manifold $\cC_0=\{y=1/x^s\}$ is given by 
\benn
\rho(\cC_0)=\left\{(v,y)\in\R^2-\cH^1:y=v^{s}\right\}=:\cS_0.
\eenn
Observe that $\cS_0$ is normally hyperbolic except at the degenerate singularity $(v,y)=(0,0)$. The key point of the projective transformation $\rho$ is that it localized the approach towards a singular point to calculate the breakdown of normal hyperbolicity; in fact, it is not the singularity arising from infinity we are interested in but the scaling of the associated slow manifold $\cS_\epsilon$ in a sufficiently small ball excluding $(v,y)=(0,0)$. Note that the case $s=1$ yields the classical transcritical structure for $\cS_0$ with $f(v,y)=v(y-v)$; see \cite{KruSzm4,Schecter} and references therein. To desingularize \eqref{eq:main_local} one can try to apply a blow-up transformation. The blow-up method \cite{Dumortier1} was introduced into fast-slow systems theory in the work of Dumortier and Roussarie \cite{DumortierRoussarie}. For additional background on the blow-up method for fast-slow local singularities see \cite{KruSzm1}. 

\section{Desingularization via Blow-Up}
\label{sec:blowup}

Re-writing \eqref{eq:main_local1} on the fast time scale and augmenting the system by $\epsilon'=0$ leads to the vector field 
\be
\label{eq:main_local2}
\begin{array}{rcl}
v'&=& v^2\left(y-v^{s}\right),\\
y'&=& \epsilon \mu v^{s},\\ 
\epsilon'&=&0, 
\end{array}
\ee
which we denote by $X$. Consider the manifold $\bar{\cB}:=\cS^2\times [0,r_0]$ for some $r_0>0$ where $\cS^2\subset \R^3$ denotes the unit sphere. Denote the coordinates on $\bar{\cB}$ by $(\bar{v},\bar{y},\bar\epsilon,\bar{r})$ where $(\bar{v},\bar{y},\bar\epsilon)\in\cS^2$. A general weighted blow-up transformation $\Phi:\bar{\cB}\ra \R^3$ is then given by
\be
\label{eq:bu1choice}
v=\bar{r}^{\alpha_v}\bar{v},\qquad y=\bar{r}^{\alpha_y}\bar{y},\qquad \epsilon=\bar{r}^{\alpha_\epsilon}\bar{\epsilon},  
\ee 
where we choose the coefficients as
\be
\label{eq:bu2choice}
(\alpha_v,\alpha_y,\alpha_\epsilon)=(1,s,s+1). 
\ee
The choice of $(\alpha_v,\alpha_y,\alpha_\epsilon)$ can be motivated by using the Newton polygon \cite{Dumortier1,BerglundKunz}. Via $\Phi$ a vector field $\bar{X}$ is induced on the space $\bar{\cB}$ by requiring $\Phi_*(\bar{X})=X$ where $\Phi_*$ is the usual push-forward. Recall that we restrict to the manifold $\cC_0$ in the positive quadrant. Let $\cB_{\bar{y}}:=\bar{\cB}\cap\{\bar{y}>0\}$ and $\cB_{\bar{\epsilon}}:=\bar{\cB}\cap\{\bar{\epsilon}>0\}$ and consider two charts 
\benn
\kappa_1:\cB_{\bar{y}}\ra \R^3 \qquad \text{and}\qquad \kappa_2:\cB_{\bar{\epsilon}}\ra \R^3
\eenn
where $(v_1,r_1,\epsilon_1)$ and $(v_2,y_2,r_2)$ denote the respective coordinates for $\cB_{\bar{y}}$ and $\cB_{\bar{\epsilon}}$; see also \cite[Fig2]{KruSzm1} for the basic geometry of a blow-up in $\R^3$. Slightly tedious, but straightforward, calculations yield the following result:\medskip

\begin{lem}
\label{lem:kappa_maps}
The maps $\kappa_1$ and $\kappa_2$ are given by
\benn
\begin{array}{lll}
v_1=\bar{v}\bar{y}^{-1/s},\quad & r_1=\bar{r}\bar{y}^{1/s}, \quad & \epsilon_1=\bar{\epsilon}\bar{y}^{-(s+1)/s},\\
v_2=\bar{v}\bar{\epsilon}^{-1/(s+1)},\quad &  y_2=\bar{y}\bar{\epsilon}^{-s/(s+1)},\quad & r_2=\bar{r}\bar{\epsilon}^{1/(s+1)}.\\
\end{array}
\eenn
The transitions functions $\kappa_{12}$ and $\kappa_{21}$ between charts are
\benn
\begin{array}{lll}
v_2=v_1\epsilon_1^{-1/(s+1)}, \quad & y_2=\epsilon_1^{-s/(s+1)},\quad & r_2=r_1\epsilon_1^{1/(s+1)},\\
v_1=v_2y_2^{-1/s},\qquad & r_1=r_2y_2^{1/s},\quad & \epsilon_1=y_2^{-(s+1)/s}.
\end{array}
\eenn
\end{lem}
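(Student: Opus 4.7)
The plan is to unwind the definitions of the directional charts $\kappa_1$, $\kappa_2$ and carry out the substitutions prescribed by \eqref{eq:bu1choice}--\eqref{eq:bu2choice}. The chart $\kappa_1$ covers $\cB_{\bar{y}}$ by normalizing $\bar{y}=1$: a point $(\bar{v},\bar{y},\bar{\epsilon},\bar{r})$ with $\bar{y}>0$ is represented by coordinates $(v_1,r_1,\epsilon_1)$ for which $v=r_1 v_1$, $y=r_1^s$ and $\epsilon=r_1^{s+1}\epsilon_1$. Analogously, $\kappa_2$ normalizes $\bar{\epsilon}=1$ and yields $v=r_2 v_2$, $y=r_2^s y_2$, $\epsilon=r_2^{s+1}$. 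With this identification the lemma reduces to comparing these two normalizations with the global blow-up relations $v=\bar{r}\bar{v}$, $y=\bar{r}^s\bar{y}$, $\epsilon=\bar{r}^{s+1}\bar{\epsilon}$.

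To derive the formula for $\kappa_1$ I would first equate the two expressions for $y$, namely $\bar{r}^s\bar{y}=r_1^s$, and solve for $r_1=\bar{r}\bar{y}^{1/s}$, which is unambiguous on $\bar{y}>0$. Substituting this back into $v=\bar{r}\bar{v}=r_1 v_1$ and $\epsilon=\bar{r}^{s+1}\bar{\epsilon}=r_1^{s+1}\epsilon_1$ then gives $v_1=\bar{v}\bar{y}^{-1/s}$ and $\epsilon_1=\bar{\epsilon}\bar{y}^{-(s+1)/s}$. The derivation for $\kappa_2$ is identical in spirit: solve $\bar{r}^{s+1}\bar{\epsilon}=r_2^{s+1}$ for $r_2=\bar{r}\bar{\epsilon}^{1/(s+1)}$, then read off $v_2=\bar{v}\bar{\epsilon}^{-1/(s+1)}$ and $y_2=\bar{y}\bar{\epsilon}^{-s/(s+1)}$.

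For the transition functions I would equate the two parametrizations of a point in the overlap $\cB_{\bar{y}}\cap\cB_{\bar{\epsilon}}$. Matching the $\epsilon$-components yields $r_2^{s+1}=r_1^{s+1}\epsilon_1$, hence $r_2=r_1\epsilon_1^{1/(s+1)}$; matching $y$ then produces $y_2=\epsilon_1^{-s/(s+1)}$, and matching $v$ gives $v_2=v_1\epsilon_1^{-1/(s+1)}$. The reverse transition $\kappa_{21}$ follows from solving $y_2=\epsilon_1^{-s/(s+1)}$ for $\epsilon_1=y_2^{-(s+1)/s}$ on $y_2>0$ and back-substituting into the other two equations. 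The entire argument is exponent bookkeeping; the only points that deserve mild care are that the positivity constraints defining $\cB_{\bar{y}}$ and $\cB_{\bar{\epsilon}}$ make the fractional powers unambiguous, and that one should confirm $\kappa_{12}$ and $\kappa_{21}$ are mutual inverses, a one-line exponent check after the formulas are in hand.
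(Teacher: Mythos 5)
Your derivation is correct and is exactly the ``slightly tedious, but straightforward'' computation the paper alludes to without writing out: the paper gives no explicit proof of Lemma \ref{lem:kappa_maps}, and your directional-chart normalizations $\bar{y}=1$ and $\bar{\epsilon}=1$ together with the exponent bookkeeping reproduce all six chart formulas and both transition maps. No gaps; the only remark worth keeping is the one you already make, namely that positivity of $\bar{y}$ (resp.\ $\bar{\epsilon}$) on $\cB_{\bar{y}}$ (resp.\ $\cB_{\bar{\epsilon}}$) renders the fractional powers single-valued.
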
\medskip

\begin{lem}
\label{lem:kappa_flows}
The desingularized vector field in $\kappa_1$ is given by
\be
\label{eq:vf_kappa1}
\begin{array}{lcl}
v_1'&=&sv_1^2(1-v_1^s)-\mu\epsilon_1 v_1^{s+1},\\
r_1'&=&\mu r_1\epsilon_1v_1^s,\\
\epsilon_1'&=&-(s+1)\mu\epsilon_1^2v_1^s,\\
\end{array}
\ee
The desingularized vector field in $\kappa_2$ is given by
\be
\label{eq:vf_kappa2} 
\begin{array}{lcl}
v_2'&=&v_2^2(y_2-v_1^s),\\
y_2'&=&\mu v_2^s.\\
\end{array}
\ee
\end{lem}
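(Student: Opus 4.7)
The plan is to compute both vector fields by direct application of the chain rule together with a time rescaling in each chart, exploiting the fact that in the respective chart one of the blown-up coordinates ($\bar{y}$ in $\kappa_1$, $\bar{\epsilon}$ in $\kappa_2$) is fixed to be unity, so the blow-up map restricted to the chart is simply $v=\bar r^{\alpha_v}\cdot(\text{chart var})$ etc. with $(\alpha_v,\alpha_y,\alpha_\epsilon)=(1,s,s+1)$ as in \eqref{eq:bu2choice}.

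For chart $\kappa_1$ I set $\bar y=1$, which gives $v=r_1v_1$, $y=r_1^s$, $\epsilon=r_1^{s+1}\epsilon_1$. Differentiating these three identities with respect to the fast time and equating the results to the three components of $X$ in \eqref{eq:main_local2} yields a linear system for $v_1',r_1',\epsilon_1'$. The equation for $y$ is the cleanest: $s r_1^{s-1}r_1'=\epsilon\mu v^s=r_1^{2s+1}\mu\epsilon_1 v_1^s$, so $r_1'=r_1^{s+2}\mu\epsilon_1 v_1^s/s$. Substituting this into $\epsilon'=(s+1)r_1^s r_1'\epsilon_1+r_1^{s+1}\epsilon_1'=0$ gives $\epsilon_1'=-(s+1)r_1^{s+1}\mu\epsilon_1^2 v_1^s/s$, and substituting into $v'=r_1'v_1+r_1v_1'=r_1^{s+2}v_1^2(1-v_1^s)$ gives $v_1'=r_1^{s+1}v_1^2(1-v_1^s)-r_1^{s+1}\mu\epsilon_1 v_1^{s+1}/s$. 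Every right-hand side now carries the common factor $r_1^{s+1}/s$, so the time rescaling $dt_{\mathrm{new}}=(r_1^{s+1}/s)\,dt$ (equivalent to multiplying $\bar X$ in chart $\kappa_1$ by $s/r_1^{s+1}$) is orientation-preserving on $\{r_1>0\}$ and produces exactly \eqref{eq:vf_kappa1}.

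Chart $\kappa_2$ is handled analogously by setting $\bar\epsilon=1$, which gives $v=r_2v_2$, $y=r_2^s y_2$, $\epsilon=r_2^{s+1}$. The third identity together with $\epsilon'=0$ forces $r_2'=0$, so $r_2=\epsilon^{1/(s+1)}$ becomes a parameter in this chart; this is precisely the feature that makes $\kappa_2$ the \emph{rescaling} chart. The remaining two equations then reduce to $r_2v_2'=r_2^{s+2}v_2^2(y_2-v_2^s)$ and $r_2^s y_2'=r_2^{2s+1}\mu v_2^s$, and the common factor $r_2^{s+1}$ can be divided out by the time rescaling $dt_{\mathrm{new}}=r_2^{-(s+1)}dt$, yielding \eqref{eq:vf_kappa2}.

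None of this is deep; the only substantive points are (i) being careful that the chart-dependent time rescalings differ by the factor $s$ between the two charts, which is harmless because each chart carries its own time parameter and the rescaling factors are strictly positive on the relevant open sets, and (ii) verifying that the exponent choice \eqref{eq:bu2choice} is what makes a single common factor appear in all three components simultaneously (otherwise the desingularization would leave residual blow-down singularities). I expect the main bookkeeping obstacle to be simply keeping track of the powers of $r_1$ (resp.\ $r_2$) that accumulate from the nonlinear terms $v^2$, $v^s$, and $\epsilon v^s$; once these are tallied, the result drops out and the verification of the transition formulas in Lemma~\ref{lem:kappa_maps} is obtained as a consistency check by applying $\kappa_{12}$ to \eqref{eq:vf_kappa1} and recovering \eqref{eq:vf_kappa2} up to the rescaling factor.
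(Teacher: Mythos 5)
Your proposal is correct and follows essentially the same route as the paper: differentiate the chart representations $v=r_1v_1$, $y=r_1^s$, $\epsilon=r_1^{s+1}\epsilon_1$ (resp.\ $v=r_2v_2$, $y=r_2^sy_2$, $\epsilon=r_2^{s+1}$), solve for $v_1',r_1',\epsilon_1'$, and divide out the common factor by a positive time rescaling. Your bookkeeping matches the paper's intermediate expressions exactly, and you correctly make explicit the factor $s$ that the paper absorbs into its unspecified ``time re-scaling'' after dividing by $r_1^{s+1}$.
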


\begin{proof}(of Lemma \ref{lem:kappa_flows})
The required calculations follow by direct differentiation, for example, one has
\benn
r_1'=(\mu/s) r_1^{s+2}\epsilon_1v_1^s\qquad  \text{and} \qquad v'=r_1'v_1+r_1v_1' 
\eenn
which imply upon algebraic manipulation that
\benn
v_1'=r_1^{s+1}\left(v_1^2(1-v_1^s)-\frac{\mu}{s}\epsilon_1 v_1^{s+1}\right).
\eenn
Similarly, one obtains the equation for $\epsilon_1$ given by 
\benn
\epsilon_1'=-\frac{(s+1)}{s}\mu r^{s+1}\epsilon_1^2v_1^s.
\eenn
A division by the common factor $r_1^{s+1}$ and a time re-scaling yield \eqref{eq:vf_kappa1}. In $\kappa_2$ the transformation is just a re-scaling $(v,y)=\left(\epsilon^{1/(s+1)}v_2,\epsilon^{s/(s+1)}y_2\right)$ and a time rescaling $t\ra t/\epsilon$ gives \eqref{eq:vf_kappa2}.  
\end{proof}

\section{Dynamics in the First Chart}
\label{sec:blowup1}

We can continue the critical manifold $\cS_0=\{(v,y)\in(\R^2)^+:y=v^s\}$ into the chart $\kappa_1$. Using the definition of the blow-up map and Lemma \eqref{lem:kappa_maps} imply the relevant continuation result:\medskip

\begin{lem}
$(\kappa_1\circ\Phi^{-1})(\cS_0)=\{(v_1,y_1)\in(\R^2)^+:v_1=1\}=:\cS_{1,a}$.
\end{lem}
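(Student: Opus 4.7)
The claim is a direct computation using the explicit form of the blow-up map read off from chart $\kappa_1$, so I do not expect a real obstacle; the plan is essentially a substitution.

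First, I would compute the chart representation $\Phi_1 := \Phi\circ\kappa_1^{-1}$. The chart $\kappa_1$ corresponds to fixing $\bar{y}=1$ on the blow-up sphere, so Lemma \ref{lem:kappa_maps} specializes to $\bar{v}=v_1$, $\bar{r}=r_1$, $\bar{\epsilon}=\epsilon_1$. Substituting into the blow-up formulas \eqref{eq:bu1choice}--\eqref{eq:bu2choice} with $(\alpha_v,\alpha_y,\alpha_\epsilon)=(1,s,s+1)$ yields
\[
\Phi_1(v_1,r_1,\epsilon_1) \;=\; \bigl(r_1 v_1,\; r_1^s,\; r_1^{s+1}\epsilon_1\bigr).
\]

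Next, I would substitute this parametrization into the defining equation $y=v^s$ of $\cS_0$. The relation becomes $r_1^s = (r_1 v_1)^s = r_1^s v_1^s$, which for $r_1>0$ reduces to $v_1^s=1$. Restricting to the positive quadrant (as stipulated for $\cS_0$) picks out the unique solution $v_1=1$. Since $\cS_0$ sits inside the invariant hyperplane $\{\epsilon=0\}$, and $\epsilon=r_1^{s+1}\epsilon_1$, the preimage lies in the physical slice $\{\epsilon_1=0\}$, on which the chart reduces to the $(v_1,r_1)$-plane. This identifies $(\kappa_1\circ\Phi^{-1})(\cS_0)$ with the set $\{v_1=1\}$ in this slice, which is exactly $\cS_{1,a}$.

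Finally, since the equation $v_1=1$ does not involve $r_1$, the preimage extends continuously down to $r_1=0$, i.e.\ onto the blow-up sphere (where $\epsilon$ vanishes automatically), so $\cS_{1,a}$ is a smooth one-dimensional manifold reaching the exceptional divisor. The only two points that require a little care are (i) selecting the unique positive root of $v_1^s=1$ using the quadrant restriction, and (ii) correctly interpreting the $\epsilon=0$ condition in the blown-up picture as $\epsilon_1=0$ on the chart interior together with $r_1=0$ on the sphere.
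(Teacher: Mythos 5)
Your computation is correct and is exactly the direct substitution the paper has in mind (the paper omits the proof, stating only that it follows from the blow-up map and Lemma \ref{lem:kappa_maps}); your derived chart parametrization $v=r_1v_1$, $y=r_1^s$, $\epsilon=r_1^{s+1}\epsilon_1$ checks out against the formulas of that lemma, and the reduction of $y=v^s$ to $v_1^s=1$ with the positive root selected is precisely the intended argument. The only cosmetic remark is that the lemma's set is written with an undefined coordinate $y_1$ (evidently a typo for $r_1$), which your $(v_1,r_1)$-slice description handles correctly.
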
\medskip

Analyzing the flow \eqref{eq:vf_kappa1} gives more information about $\cS_{1,a}$. The vector field \eqref{eq:vf_kappa1} has two invariant subspaces $\epsilon_1=0$ and $r_1=0$. In the first subspace the flow is 
\be
\label{eq:flow11}
\begin{array}{lcl}
v_1'&=&sv_1^2(1-v_1^s),\\ 
r_1'&=&0,\\
\end{array}
\ee 
which has a line of equilibrium points $\cS_{1,a}$ which are attracting in the $v_1$-direction. There is also a line of equilibria at $v_1=0$ which we do not have to consider. In the second invariant subspace the flow is
\be
\label{eq:flow12}
\begin{array}{lcl}
v_1'&=&sv_1^2(1-v_1^s)-\mu\epsilon_1 v_1^{s+1},\\
\epsilon_1'&=&-(s+1)\mu\epsilon_1^2v_1^s.\\
\end{array}
\ee
Linearizing around $(v_1,\epsilon_1)=(1,0)$ provides the important local dynamics:\medskip

\begin{lem}
\label{lem:eqpt}
The vector field \eqref{eq:flow12} has a center-stable equilibrium at $(v_1,\epsilon_1)=(1,0)=:p_{1,a}$ with eigenvalues $0$ and $-s^2$ and associated eigenvectors $(-\mu/s^2,1)^T$ and $(1,0)^T$.
\end{lem}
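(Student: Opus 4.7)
The plan is a routine linearization at the proposed equilibrium $p_{1,a}=(1,0)$ of the planar system \eqref{eq:flow12}. First I would verify that $p_{1,a}$ is an equilibrium: substituting $v_1=1,\,\epsilon_1=0$ into \eqref{eq:flow12} gives $v_1'=s\cdot 1\cdot(1-1)-0=0$ and $\epsilon_1'=0$ (the second component carries a factor of $\epsilon_1^2$ and hence vanishes immediately).

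The second step is to compute the Jacobian $J:=DX|_{p_{1,a}}$. Expanding the first component as $sv_1^2-sv_1^{s+2}-\mu\epsilon_1 v_1^{s+1}$, the partial derivative with respect to $v_1$ at $(1,0)$ reduces to $2s-s(s+2)=-s^2$, while the partial derivative with respect to $\epsilon_1$ equals $-\mu$. Both partial derivatives of $\epsilon_1'=-(s+1)\mu\epsilon_1^2 v_1^s$ vanish at $p_{1,a}$ since each resulting term retains a factor of $\epsilon_1$. Hence
\[
J=\begin{pmatrix} -s^2 & -\mu \\ 0 & 0 \end{pmatrix}.
\]
The eigenvalues are read off the diagonal of this upper-triangular matrix as $-s^2<0$ and $0$, which is exactly the center-stable configuration claimed in the lemma.

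The third step is to identify the eigenvectors. Solving $(J+s^2 I)\xi=0$ forces $\xi_2=0$ (from the second row scaled by $s^2$), which gives the stable eigenvector $(1,0)^T$. Solving $J\xi=0$ gives the single relation $-s^2\xi_1-\mu\xi_2=0$, so normalizing $\xi_2=1$ yields the center eigenvector $(-\mu/s^2,1)^T$.

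No real obstacle arises; the only point requiring a little care is correctly expanding $sv_1^2(1-v_1^s)$ before differentiating, so that the cancellation $2s-s(s+2)=-s^2$ producing the nontrivial stable eigenvalue is not mis-counted. Note also that the center direction is not an artifact: it corresponds to the line $\{\epsilon_1=0\}\cap\{r_1=0\}$ carrying the continuum of equilibria observed in \eqref{eq:flow11}, which will be essential in the subsequent analysis when tracking $\cS_{1,a}$ into the blow-up.
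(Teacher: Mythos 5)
Your linearization is correct and is exactly the paper's (implicit) argument: the lemma is stated without a written proof, immediately after the remark that one linearizes \eqref{eq:flow12} at $(v_1,\epsilon_1)=(1,0)$, and your Jacobian $\left(\begin{smallmatrix}-s^2 & -\mu\\ 0 & 0\end{smallmatrix}\right)$, its eigenvalues and eigenvectors all check out and are consistent with the transformation matrix $M$ used later in the proof of Lemma \ref{lem:cm}. One small caveat on your closing aside only: the center eigenvector $(-\mu/s^2,1)^T$ is transverse to $\{\epsilon_1=0\}$ and is tangent to the center manifold $\cN_{1,a}$, not to the line of equilibria $\cS_{1,a}$ of \eqref{eq:flow11} --- that line lies in $\{\epsilon_1=0\}$ and varies in $r_1$, contributing a separate center direction of the full three-dimensional field \eqref{eq:vf_kappa1}.
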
\medskip

\begin{lem}
\label{lem:cm}
Locally near $p_{a,1}$ the equilibrium $p_{1,a}$ has a one-dimensional center manifold
\benn
 \cN_{1,a}=\{v_1=1-\epsilon_1\frac{\mu}{s}+c_{11}\epsilon_1^2+\cO(\epsilon_1^3):=n_{1,a}(\epsilon_1)+\cO(\epsilon_1^3))\}
\eenn
where $c_{11}=-(1+s+s^2)\mu^2/(2s^3)$.
\end{lem}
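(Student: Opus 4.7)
The plan is to apply the center-manifold theorem in the invariant subspace $\{r_1=0\}$ and then compute the Taylor coefficients of the resulting one-dimensional manifold by the standard invariance-equation ansatz.

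First, $\{r_1=0\}$ is invariant for \eqref{eq:vf_kappa1} (since $r_1'=\mu r_1\epsilon_1 v_1^s$ vanishes when $r_1=0$), so the local flow near $p_{1,a}$ reduces to the planar system \eqref{eq:flow12}. By Lemma~\ref{lem:eqpt} its linearization at $p_{1,a}$ has a simple $0$ and a simple $-s^2$ eigenvalue, so the classical center-manifold theorem yields a locally invariant $C^k$ (for any prescribed $k$) one-dimensional manifold $\cN_{1,a}$ tangent to the center eigenvector $(-\mu/s^2,1)^T$ at $p_{1,a}$. Because the $\epsilon_1$-component of this eigenvector is nonzero, $\cN_{1,a}$ can be written locally as a graph $v_1=n_{1,a}(\epsilon_1)$ with $n_{1,a}(0)=1$.

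Second, the Taylor coefficients of $n_{1,a}$ at $\epsilon_1=0$ are determined uniquely by the invariance condition. Substituting $v_1=n_{1,a}(\epsilon_1)$ into \eqref{eq:flow12}, the identity
\[
n_{1,a}'(\epsilon_1)\,\epsilon_1'\big|_{v_1=n_{1,a}(\epsilon_1)}\;=\;v_1'\big|_{v_1=n_{1,a}(\epsilon_1)}
\]
must hold for all small $\epsilon_1$. I would make the polynomial ansatz $n_{1,a}(\epsilon_1)=1+a_1\epsilon_1+c_{11}\epsilon_1^2+\cO(\epsilon_1^3)$, set $u:=v_1-1$, expand $v_1^s$, $v_1^{s+1}$ and $v_1^2$ via the binomial series around $v_1=1$ to second order in $u$, insert into both sides, and match coefficients order by order in $\epsilon_1$. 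The $\epsilon_1^1$ balance is a linear equation for $a_1$ whose solution is forced to agree with the slope computed from the center eigenvector in Lemma~\ref{lem:eqpt}; the $\epsilon_1^2$ balance is a linear equation for $c_{11}$ whose inhomogeneity is a polynomial in $a_1$ and the binomial coefficients arising from $(1+u)^s$.

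The only real obstacle is algebraic bookkeeping at second order. The coefficient $\binom{s}{2}$ from the quadratic term in $(1+u)^s$ must be combined with the $2s$ contributed by $v_1^2$ and with the cross term $\epsilon_1 u$ coming from $\mu\epsilon_1 v_1^{s+1}$; after dividing through by the nontrivial eigenvalue $-s^2$ these pieces assemble into the closed-form expression for $c_{11}$ stated in the lemma. Higher regularity of $n_{1,a}$ can be obtained by enlarging $k$ in the center-manifold theorem, so the $\cO(\epsilon_1^3)$ remainder is uniform on a fixed neighborhood of $\epsilon_1=0$ and no further argument beyond polynomial invariance is required.
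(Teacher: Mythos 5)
Your overall route is the same as the paper's: existence of $\cN_{1,a}$ from the center manifold theorem applied to the planar restriction \eqref{eq:flow12} on the invariant subspace $\{r_1=0\}$, followed by an order-by-order solution of the invariance equation with a quadratic ansatz. The only structural difference is that the paper first applies the linear change of variables $M$ built from the eigenvectors of Lemma~\ref{lem:eqpt} and writes the center manifold as $z_2=c_{11}z_1^2+\cO(z_1^3)$, whereas you work directly with the graph $v_1=n_{1,a}(\epsilon_1)$; since $\epsilon_1$ coincides with the paper's center coordinate $z_1$, these are the same parametrization, and your justification (the $\epsilon_1$-component of the center eigenvector is nonzero) is exactly what makes the shortcut legitimate.

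There is, however, one place where your write-up asserts two things that cannot both hold, and you should not leave the algebra as a promissory note. The $\epsilon_1$-balance of the invariance equation for \eqref{eq:flow12} reads $-s^2a_1-\mu=0$, so $a_1=-\mu/s^2$, which is indeed the slope of the center eigenvector $(-\mu/s^2,1)^T$ from Lemma~\ref{lem:eqpt} --- but it is \emph{not} the coefficient $-\mu/s$ printed in Lemma~\ref{lem:cm} unless $s=1$. Likewise, at second order the cross term $-\mu(s+1)a_1\epsilon_1^2$ coming from $-\mu\epsilon_1 v_1^{s+1}$ cancels exactly against $n_{1,a}'(\epsilon_1)\,\epsilon_1'$, so $c_{11}$ is determined solely by the quadratic expansion of $sv_1^2(1-v_1^s)$, namely $-s^2c_{11}=\bigl(s\binom{s}{2}+2s^2\bigr)a_1^2$; with $a_1=-\mu/s^2$ this gives $c_{11}=-(s+3)\mu^2/(2s^4)$, which again agrees with the printed value only at $s=1$ up to the linear term. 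So your claim that the pieces ``assemble into the closed-form expression for $c_{11}$ stated in the lemma'' would fail if you carried it out literally: you would instead be forced to correct the stated coefficients (the paper's own eigenvector in Lemma~\ref{lem:eqpt} already implies the linear coefficient must be $-\mu/s^2$). This does not affect the qualitative conclusions drawn from $\cN_{1,a}$ later in the paper, but a complete proof must either verify or amend the displayed constants rather than appeal to them.
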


\begin{proof}(of Lemma \ref{lem:cm})
Existence is guaranteed by the center manifold theorem \cite{GH}. Using a translation $V_1=v_1-1$ moves $p_{1,a}$ to the origin. A further linear coordinate change 
\benn
\left(\begin{array}{c}V_1 \\ \epsilon_1\\ \end{array}\right)=Mz=\left(\begin{array}{cc}-\mu/s^2 & 1 \\ 1 & 0\\ \end{array}\right)
\left(\begin{array}{c}z_1 \\ z_2\\ \end{array}\right)
\eenn
transforms \eqref{eq:flow12} into
\benn
\begin{array}{lcrll}
z_1'&=& 0 &-\mu(1+s)z_1^2&+\cO(3),\\
z_2'&=& -s^2 &-\mu^2\left(1+\frac{1}{2s}+\frac{1}{2s^2}\right)z_1^2+2\mu z_1z_2-\left(\frac{3s^2}{2}+\frac{s^3}{2}\right)z_2^2&+\cO(3),\\
\end{array}
\eenn
where $\cO(3)=\cO(z_1^3,z_1^2z_2,z_1z_2^2,z_2^3)$. Making the ansatz $z_2=c_{11}z_1^2+\cO(z_1^3)$ and substituting into the invariance equation \cite{GH} for the center manifold at $z=(0.0)$ implies the condition
\benn
0=\left(c_{11}s+\mu^2+\frac{\mu^2}{2s}+\frac{\mu^2}{2s^2}\right)z_1^2+\cO(z_1^3).
\eenn
Therefore, $c_{11}=-(1+s+s^2)\mu^2/(2s^3)$ and transforming back to $(v_1,\epsilon_1)$ yields the result.
\end{proof}\medskip

As before, other equilibria of \eqref{eq:flow12} will not be of relevance here. Lemma \ref{lem:eqpt} and the center manifold theorem \cite{GH} imply the next result:\medskip

\begin{lem}
\label{lem:extend}
There exists a center-stable manifold $\cM_{1,a}$ for \eqref{eq:vf_kappa1} at $p_{1,a}$ containing $\cS_{1,a}$ and $\cN_{1,a}$. Furthermore, $\cM_{1,a}$ is locally given as a graph of a map $y_1=h_{1,a}(r_1,\epsilon_1)$.  
\end{lem}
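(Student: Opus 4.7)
The plan is to apply the center manifold theorem at the equilibrium $p_{1,a}=(1,0,0)$ for the three-dimensional flow \eqref{eq:vf_kappa1} and then embed the two previously obtained lower-dimensional invariant objects $\cS_{1,a}$ and $\cN_{1,a}$ into the resulting two-dimensional manifold.

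First I would compute the full Jacobian of \eqref{eq:vf_kappa1} at $p_{1,a}$. A direct differentiation gives
\[
DX(p_{1,a})=\begin{pmatrix} -s^2 & 0 & -\mu \\ 0 & 0 & 0 \\ 0 & 0 & 0 \end{pmatrix},
\]
with spectrum $\{-s^2,0,0\}$. The one-dimensional stable eigenspace is spanned by $(1,0,0)^T$, while the two-dimensional center eigenspace is spanned by $(0,1,0)^T$ (the $r_1$-axis) together with the vector $(-\mu/s^2,0,1)^T$ already identified in Lemma \ref{lem:eqpt} as the center direction within the invariant slice $\{r_1=0\}$. The center manifold theorem \cite{GH} then produces a smooth locally invariant two-dimensional manifold $\cM_{1,a}$ through $p_{1,a}$ tangent to this center subspace. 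Because the stable eigenvector is the pure $v_1$-axis, the tangent plane of $\cM_{1,a}$ at $p_{1,a}$ projects isomorphically onto the $(r_1,\epsilon_1)$-plane, and the implicit function theorem yields the graph representation $v_1=h_{1,a}(r_1,\epsilon_1)$ claimed in the statement (with the $y_1$ appearing there understood as a typographical stand-in for $v_1$). The qualifier ``center-stable'' reflects the fact that the $-s^2$ eigendirection contracts nearby orbits exponentially onto $\cM_{1,a}$.

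The main obstacle, and the reason the claim is not automatic from the center manifold theorem alone, is the intrinsic non-uniqueness of center manifolds: one must argue that a single choice of $\cM_{1,a}$ can be taken to simultaneously contain both $\cS_{1,a}$ and $\cN_{1,a}$. For $\cS_{1,a}=\{v_1=1,\epsilon_1=0\}$, direct substitution into \eqref{eq:vf_kappa1} shows that every point is an equilibrium; restricted to the invariant slice $\{\epsilon_1=0\}$ the flow reduces to \eqref{eq:flow11}, in which $r_1$ is conserved, so any smooth invariant curve through $p_{1,a}$ tangent to $(0,1)^T$ must satisfy $v_1^2(1-v_1^s)\equiv 0$ along it, forcing $v_1\equiv 1$ locally. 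Hence $\cM_{1,a}\cap\{\epsilon_1=0\}$ is pinned to $\cS_{1,a}$ for every choice of center manifold, i.e.\ $h_{1,a}(r_1,0)\equiv 1$. For $\cN_{1,a}$ the analogous intersection $\cM_{1,a}\cap\{r_1=0\}$ is a one-dimensional center manifold of $p_{1,a}$ for the planar flow \eqref{eq:flow12}; since any two such center manifolds agree to all finite Taylor orders, the expansion of $h_{1,a}(0,\epsilon_1)$ must coincide with that of $n_{1,a}$ from Lemma \ref{lem:cm}. Among the infinite family of center manifolds supplied by the theorem I would then select the specific $\cM_{1,a}$ whose restriction to $\{r_1=0\}$ equals $\cN_{1,a}$; the flat-function ambiguity inherent in the center manifold construction accommodates this selection, while the rigidity argument for the equilibria on $\cS_{1,a}$ automatically fixes the behavior along $\{\epsilon_1=0\}$, completing the proof.
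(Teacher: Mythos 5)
Your proof is correct and follows essentially the same route as the paper, which simply invokes Lemma \ref{lem:eqpt} together with the center manifold theorem \cite{GH} without writing out any details. Your Jacobian at $p_{1,a}$, the identification of the two-dimensional center subspace, the graph representation $v_1=h_{1,a}(r_1,\epsilon_1)$ (correctly reading the paper's $y_1$ as a typographical stand-in for $v_1$), and the containment arguments for $\cS_{1,a}$ and $\cN_{1,a}$ all check out and in fact supply more justification than the paper itself provides.
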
\medskip

From the flow on the center manifold we see that there are two cases based upon the sign of $\mu$. If $\mu>0$ then trajectories in the center manifold flow away from the sphere $\cS^2$ while for $\mu<0$, trajectories in $\cM_{1,a}$ flow from the chart $\kappa_1$ onto the sphere. We are only going to deal with the case $\mu<0$ from now on. The case $\mu>0$ can be obtained from $\mu<0$ by a time reversal of the the original problem \eqref{eq:main_local2}. Figure \ref{fig:3}(a) provides a sketch of the results and the notation for the relevant dynamical objects.

\begin{figure}[htbp]
\psfrag{v1}{$v_1$}
\psfrag{e1}{$\epsilon_1$}
\psfrag{r1}{$r_1$}
\psfrag{v2}{$v_2$}
\psfrag{y2}{$y_2$}
\psfrag{M1}{$\cM_{1,a}$}
\psfrag{N1}{$\cN_{1,a}$}
\psfrag{N2}{$\cN_{2,a}$}
\psfrag{n2a}{$n_{2,a}(t)$}
\psfrag{p1}{$p_{1,a}$}
\psfrag{S1}{$\cS_{1,a}$}
\psfrag{a}{(a)}
\psfrag{b}{(b)}
	\centering
		\includegraphics[width=0.6\textwidth]{./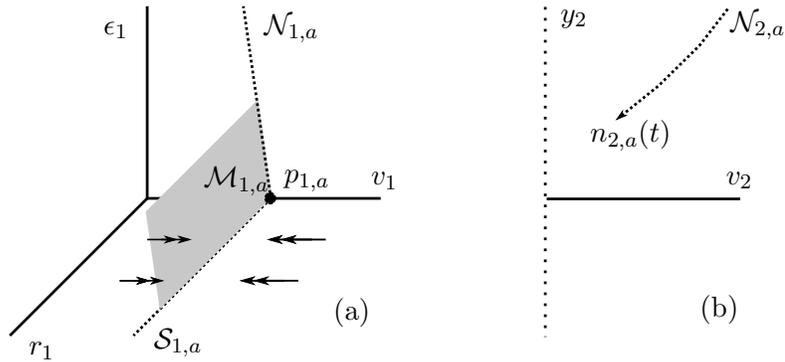}
	\caption{\label{fig:3}(a) Sketch of the dynamics in chart $\kappa_1$. (b) Sketch of the dynamics in chart $\kappa_2$}	
\end{figure}

\section{Dynamics in the Second Chart}
\label{sec:blowup2}

Now the critical manifold in the chart $\kappa_1$ has to be continued into the chart $\kappa_2$. This transition is studied using the continuation $\cM_{2,a}$ of $\cM_{1,a}$ under the flow in $\kappa_2$; see also Figure \ref{fig:3}(b). \medskip

\begin{lem}The curve $\cN_{1,a}\subset \cM_{1,a}$ transforms as 
\benn
\kappa_{12}(\cN_{1,a})=\left\{(v_2,y_2)\in\R^2:v_2=1-\frac{\mu}{s y_2}+\frac{c_{11}}{y_2^{(2s+1)/s}}+\cO\left(y_2^{\frac{-3s-2}{s}}\right)\text{ as $y_2\ra +\I$}\right\}.
\eenn
\end{lem}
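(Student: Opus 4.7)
My plan is to perform a direct chart-substitution using the parametrization of the center manifold from Lemma~\ref{lem:cm} together with the transition formulas in Lemma~\ref{lem:kappa_maps}, and then to expand asymptotically as $\epsilon_1\searrow 0$, which corresponds to $y_2\to+\infty$.

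First I would parametrize $\cN_{1,a}$ explicitly by $\epsilon_1$. Because the center manifold obtained in Lemma~\ref{lem:cm} sits inside the invariant subspace $\{r_1=0\}$ of the desingularized field \eqref{eq:vf_kappa1}, the curve $\cN_{1,a}$ consists of the points
\benn
(v_1,r_1,\epsilon_1) \;=\; \bigl(n_{1,a}(\epsilon_1)+\cO(\epsilon_1^3),\,0,\,\epsilon_1\bigr), \qquad n_{1,a}(\epsilon_1)=1-\frac{\mu}{s}\epsilon_1+c_{11}\epsilon_1^2,
\eenn
for $\epsilon_1$ in a small one-sided neighborhood of $0$.

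Next I would push this parametrization through $\kappa_{12}$. The transition formulas from Lemma~\ref{lem:kappa_maps} give $v_2=v_1\,\epsilon_1^{-1/(s+1)}$, $y_2=\epsilon_1^{-s/(s+1)}$, and $r_2=r_1\,\epsilon_1^{1/(s+1)}=0$, so the image is a curve in the $(v_2,y_2)$-plane. Inverting the second relation yields $\epsilon_1=y_2^{-(s+1)/s}$, which smoothly identifies the regime $\epsilon_1\searrow 0$ with $y_2\to+\infty$. Substituting $\epsilon_1=y_2^{-(s+1)/s}$ into $v_2=v_1(\epsilon_1)\,\epsilon_1^{-1/(s+1)}$ and using the identity $\epsilon_1^k\cdot\epsilon_1^{-1/(s+1)}=y_2^{(1-k(s+1))/s}$ converts each monomial from the expansion of $n_{1,a}$ into a prescribed power of $y_2$. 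In particular, the $k=1$ coefficient $-\mu/s$ produces the $-\mu/(sy_2)$ term, the $k=2$ coefficient $c_{11}$ produces the $c_{11}/y_2^{(2s+1)/s}$ term, and the $\cO(\epsilon_1^3)$ center-manifold remainder becomes $\cO(y_2^{-(3s+2)/s})$; reading off the coefficients yields the stated asymptotic form.

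The one point requiring care, rather than a genuine obstacle, is the uniformity of the remainder estimate as $y_2\to+\infty$. This reduces to the observation that $\epsilon_1\mapsto y_2=\epsilon_1^{-s/(s+1)}$ is a smooth monotone bijection from a fixed one-sided neighborhood of $\epsilon_1=0$ onto a neighborhood of $y_2=+\infty$, so that the $\cO(\epsilon_1^3)$ bound provided by the quantitative center-manifold theorem \cite{GH} pulls back to a uniform $\cO(y_2^{-(3s+2)/s})$ bound of the claimed order.
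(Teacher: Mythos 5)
Your method --- parametrize $\cN_{1,a}$ by $\epsilon_1$ using Lemma \ref{lem:cm}, push it through the transition map $\kappa_{12}$ of Lemma \ref{lem:kappa_maps}, and re-expand in $y_2=\epsilon_1^{-s/(s+1)}$ --- is exactly the computation the paper intends (it gives no explicit proof of this lemma), and your handling of the invariant subspace $\{r_1=0\}$ and of the uniformity of the remainder is fine. However, your final step, ``reading off the coefficients yields the stated asymptotic form,'' does not follow from your own formulas. Your identity $\epsilon_1^{k}\epsilon_1^{-1/(s+1)}=y_2^{(1-k(s+1))/s}$ applied to the $k=0$ term of $n_{1,a}(\epsilon_1)=1-\frac{\mu}{s}\epsilon_1+c_{11}\epsilon_1^2+\cO(\epsilon_1^3)$ produces $y_2^{1/s}$, not the constant $1$ appearing in the statement; carried out honestly the substitution gives
\benn
v_2=v_1\,\epsilon_1^{-1/(s+1)}=y_2^{1/s}-\frac{\mu}{s\, y_2}+\frac{c_{11}}{y_2^{(2s+1)/s}}+\cO\left(y_2^{(-3s-2)/s}\right).
\eenn
You verify $k=1$, $k=2$ and the remainder order (all of which do match the stated exponents) but silently skip $k=0$, which is precisely where the computed expansion and the stated one disagree.

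This is not a removable technicality. Since $v_2=v_1\,y_2^{1/s}$ and $v_1\ra 1$ as $\epsilon_1\ra 0$, any curve in $\cM_{1,a}$ emanating from $p_{1,a}$ must satisfy $v_2\sim y_2^{1/s}\ra+\I$ in chart $\kappa_2$; this is consistent with the fact that $\cS_0=\{y=v^s\}$ itself maps to $\{y_2=v_2^s\}$, i.e.\ $v_2=y_2^{1/s}$, under the blow-up, whereas a curve with $v_2\ra 1$ could not be a continuation of $\cS_{1,a}=\{v_1=1\}$. The leading term in the lemma as printed therefore appears to be a typo for $y_2^{1/s}$ (already for $s=1$ the discrepancy is $v_2=y_2$ versus $v_2=1$). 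A complete argument must either derive the constant leading term $1$ (which the transition formulas rule out) or state explicitly that the correct expansion begins with $y_2^{1/s}$ while the lower-order terms and remainder are as claimed; as written, your proposal asserts agreement with a formula that your own calculation contradicts.
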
\medskip

We introduce the notation $\cN_{2,a}:=\kappa_{12}(\cN_{1,a})$ and note that $\cN_{2,a}$ can be extended under the flow \eqref{eq:vf_kappa2} in chart $\kappa_2$ for initial conditions with $v_2\approx 1$ and arbitrarily large $y_2>0$. To emphasize that this extension is a trajectory we are going to denote it by $n_{2,a}(t)=(n_{v_2,a}(t),n_{y_2,a}(t))^T\in\R^2$. A case distinction based upon the value of $s$ will be necessary. For $s=2k$ with $k\in\N$ the differential equation is    
\be
\label{eq:vfs2} 
\begin{array}{lcl}
v_2'&=&v_2^2(y_2-v_2^{2k}),\\
y_2'&=&\mu v^{2k}_2,\\
\end{array}
\ee
so that $y_2'<0$ and $v_2'<0$ for $y_2<v_2^{2k}$ imply that $n_{v_2,a}(t)\ra 0$ for $t\ra +\I$. The variational equation is defined via the time-dependent matrix
\beann
A(t)&:=&\left.D_{(v_2,y_2)}\left(\begin{array}{c}v_2^2(y_2-v_2^{2k}) \\ \mu v^{2k} \\\end{array}\right)\right|_{n_{2,a}(t)}\\
&=&\left(\begin{array}{cc}2n_{y_2,a}(t)n_{v_2,a}(t)-(2k+2)n_{v_2,a}(t)^{2k+1} & 1 \\ 2k \mu ~n_{v_2,a}(t)^{2k-1} & 0 \\\end{array}\right),
\eeann 
which shows that the variational equation becomes asymptotically autonomous in forward time \cite{Rasmussen} and $A(t)$ approaches a double zero eigenvalue when $t\ra +\I$. Hence $\cM_{2,a}$ is not everywhere normally hyperbolic in $\kappa_2$. A possible alternative argument for $k>1$, leading to the same conclusion, involves desingularizing \eqref{eq:vfs2} using division by $v_2^2$ and observing that the resulting vector field is parallel on the $y_2$-axis with no flow in the $y_2$-component. However, with this argument the case $k=0$ is special but can be treated using standard results about the resulting Ricatti equation which also appears in the blow-up analysis of the fold point \cite{KruSzm3}.\medskip 

For $s=2k+1$ and $k\in\N_0$ the vector field is 
\be
\label{eq:vfs3} 
\begin{array}{lcl}
v_2'&=&v_2^2(y_2-v_2^{2k+1}),\\
y_2'&=&\mu v^{2k+1}_2.\\
\end{array}
\ee
Note that \eqref{eq:vfs3} does not have the same nice monotonicity properties as before. To show the convergence towards $v_2=0$ define 
\benn
V(v_2,y_2):=\frac{1}{2k}v_2^{2k}+\frac{1}{2|\mu|}y_2^{2}\quad \Rightarrow\quad \frac{dV}{dt}=v_2^{2k-1}v_2'+\frac{1}{|\mu|}y_2y_2'=-v_2^{2(2k+1)}<0
\eenn 
showing that $V$ is a Lyapunov function \cite{HirschSmaleDevaney} which implies the required convergence. Computing the variational equation yields
\beann
A(t)&:=&\left.D_{(v_2,y_2)}\left(\begin{array}{c}v_2^2(y_2-v_2^{2k+1}) \\ \mu v_2^{2k+1} \\\end{array}\right)\right|_{n_{2,a}(t)}\\
&=&\left(\begin{array}{cc}2n_{v_2,a}(t)n_{y_2,a}(t)-(2k+3)n_{v_2,a}(t)^{2k+2} & n_{v_2,a}(t)^2 \\ \mu(2k+1)n_{v_2,a}(t)^{2k} & 0 \\\end{array}\right).
\eeann 
which shows that the variational equation becomes asymptotically autonomous in forward time and $A(t)$ approaches a double zero eigenvalue when $t\ra +\I$. As before, one could also use a suitable division by a power of $v_2$ and observe that the resulting vector field is parallel on the $y_2$-axis.\medskip

\begin{lem}
\label{lem:not_hyp}
$\cM_{2,a}$ is not everywhere a normally hyperbolic manifold inside $\kappa_2$.
\end{lem}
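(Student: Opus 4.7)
The plan is to exhibit a point of $\cM_{2,a}$ at which the linearization of the vector field \eqref{eq:vf_kappa2} lacks the spectral gap required by normal hyperbolicity. Since $\cN_{2,a}\subset \cM_{2,a}$ and $\cM_{2,a}$ is by construction invariant under the flow in chart $\kappa_2$, the trajectory $n_{2,a}(t)=(n_{v_2,a}(t),n_{y_2,a}(t))^T$ lies entirely in $\cM_{2,a}$ for all $t\geq 0$. It therefore suffices to detect failure of normal hyperbolicity along this single trajectory and then transfer the conclusion back to $\cM_{2,a}$.

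First I would package the two ingredients already established in the paragraphs preceding the lemma. Namely: (i) the convergence $n_{v_2,a}(t)\ra 0$ as $t\ra+\I$, which for $s=2k$ follows from the sign analysis of $v_2'$ and $y_2'$ in the region $y_2<v_2^{2k}$, and for $s=2k+1$ follows from the Lyapunov function $V$ given by $V(v_2,y_2)=\frac{1}{2k}v_2^{2k}+\frac{1}{2|\mu|}y_2^2$ (or its obvious modification when $k=0$); and (ii) the explicit form of the variational matrix $A(t)$ along $n_{2,a}(t)$ recorded in both parity cases. Passing to the limit in $A(t)$ using (i), every entry other than the constant $(1,2)$-entry in the even case (which equals $1$ and pairs with a vanishing $(2,1)$-entry) carries a strictly positive power of $n_{v_2,a}(t)$ or is identically zero, so $A(t)$ tends to a nilpotent matrix $A_\I$ whose only eigenvalue is $0$ with algebraic multiplicity two.

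From this spectral collapse I would conclude as follows. Since $\cM_{2,a}$ is a one-dimensional invariant curve in chart $\kappa_2$, the tangent direction always contributes one zero eigenvalue to the linearization; normal hyperbolicity at a point $p\in\cM_{2,a}$ requires the remaining (normal) eigenvalue at $p$ to be nonzero with a uniform lower bound in a neighborhood. The asymptotic behavior $A(t)\ra A_\I$ forces this normal eigenvalue along $n_{2,a}(t)$ to tend to $0$, so one can pick $t$ arbitrarily large and obtain points of $\cM_{2,a}$ at which the normal eigenvalue is arbitrarily small, contradicting uniform normal hyperbolicity on $\cM_{2,a}$. The main obstacle is purely conceptual: the lemma is a statement about $\cM_{2,a}$ as an extended manifold rather than about an isolated equilibrium, so one must insist on the invariance argument connecting $n_{2,a}(t)$ to $\cM_{2,a}$ before quoting the limiting nilpotent structure. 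As a clean geometric backup I would include the alternative mentioned in the text: after dividing \eqref{eq:vf_kappa2} by the appropriate power of $v_2$, the rescaled vector field has $\{v_2=0\}$ as a line of fully degenerate equilibria onto which $\cM_{2,a}$ accumulates, making non-hyperbolicity manifest.
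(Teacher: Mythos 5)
Your proposal is correct and follows essentially the same route as the paper, which proves the lemma by the computations immediately preceding its statement: the convergence $n_{v_2,a}(t)\ra 0$ (via monotonicity for even $s$ and the Lyapunov function for odd $s$), the asymptotically autonomous variational matrix $A(t)$ approaching a double zero eigenvalue, and the alternative desingularization argument on the $y_2$-axis. Your additional remark that one must first invoke the invariance of $\cM_{2,a}$ to transfer the spectral collapse along $n_{2,a}(t)$ to the manifold itself is a sensible tightening of what the paper leaves implicit.
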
\medskip

Lemma \ref{lem:not_hyp} is just a local restatement of the alignment property of the original slow manifold $\cC_\epsilon$ with the fast subsystem domains. This confirms the conjectured loss of normal hyperbolicity as $x\ra+\I$. The previous results combine to give the following statement which is a partial version of our main result.\medskip

\begin{prop}
\label{prop:main1}
Consider the family of planar fast-slow systems
\be
\label{eq:base_model_fss_main} 
\begin{array}{rcl}
x'&=& 1-x^sy,\\
y'&=& \epsilon \mu.\\
\end{array}
\ee
for $s\in\N$, $\mu\neq0$ and $\epsilon>0$ sufficiently small. Let $\cM_0\subset \cC_0$ be a compact submanifold of the critical manifold. Then $\cM_\epsilon$ extends to a normally hyperbolic manifold of \eqref{eq:base_model_fss_main} up to a domain of size 
\benn
(x,y)=\left(\cO(\epsilon^{-1/(s+1)}),\cO(\epsilon^{s/(s+1)})\right),\qquad  \text{as $\epsilon\ra 0$.}
\eenn
Under the blow-up map \eqref{eq:bu1choice}-\eqref{eq:bu2choice} the manifold $\cM_\epsilon$ cannot be extended to a normally hyperbolic slow manifold to a subset of a larger domain. 
\end{prop}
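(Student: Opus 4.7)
The plan is to assemble the chart-by-chart statements of Sections \ref{sec:blowup}--\ref{sec:blowup2} into a single conclusion in the original $(x,y)$ coordinates. First I would fix any compact $\cM_0\subset\cC_0$ lying inside $\cH^\sigma$ and invoke Fenichel's theorem to obtain a normally hyperbolic slow manifold $\cM_\epsilon$ perturbing $\cM_0$. The projective transformation $\rho$ of \eqref{eq:projective} maps $\cM_\epsilon$ into $(v,y)$-coordinates, and the weighted blow-up $\Phi$ with exponents $(1,s,s+1)$ lifts it onto $\bar{\cB}$. In the entering chart $\kappa_1$ the image lies on $\cS_{1,a}=\{v_1=1\}$ away from the equilibrium $p_{1,a}$, so that portion of the extension is simply the Fenichel branch of $\cS_{1,a}$.

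To pass through $p_{1,a}$ I would then use Lemmas \ref{lem:eqpt}--\ref{lem:extend}: the one-dimensional center manifold $\cN_{1,a}$ of Lemma \ref{lem:cm} joins the Fenichel branch smoothly to a neighbourhood of the equator, and Lemma \ref{lem:extend} packages $\cS_{1,a}\cup\cN_{1,a}$ into the center-stable graph $\cM_{1,a}$. For $\mu<0$ the flow on $\cN_{1,a}$ drives trajectories into increasing $\epsilon_1$, i.e.\ into the domain of the transition map $\kappa_{12}$ of Lemma \ref{lem:kappa_maps}. Pushing $\cN_{1,a}$ across via $\kappa_{12}$ produces the curve $\cN_{2,a}$ with $v_2\approx 1$ and $y_2$ large, whose forward $\kappa_2$-orbit $n_{2,a}(t)$ furnishes the continuation $\cM_{2,a}$.

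Inside chart $\kappa_2$ I would then split by parity as in Section \ref{sec:blowup2}: for $s=2k$ the sign structure of \eqref{eq:vfs2} already forces $n_{v_2,a}(t)\ra 0$, and for $s=2k+1$ the Lyapunov function $V=\tfrac{1}{2k}v_2^{2k}+\tfrac{1}{2|\mu|}y_2^2$ delivers the same conclusion. Lemma \ref{lem:not_hyp} then shows that $\cM_{2,a}$ loses normal hyperbolicity along $n_{2,a}$, so normal hyperbolicity persists only while $(v_2,y_2)=\cO(1)$. Reading the $\kappa_2$-coordinates of Lemma \ref{lem:kappa_maps} backwards gives $v=\epsilon^{1/(s+1)}v_2$ and $y=\epsilon^{s/(s+1)}y_2$; together with $x=1/v$ this yields the advertised scaling $(x,y)=(\cO(\epsilon^{-1/(s+1)}),\cO(\epsilon^{s/(s+1)}))$. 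The non-extendability assertion under this particular blow-up follows because every point strictly beyond the $\cO(1)$ window in $\kappa_2$ corresponds to a later time along $n_{2,a}$ at which the variational matrix $A(t)$ computed in Section \ref{sec:blowup2} has already developed the asymptotically double-zero spectrum.

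The main obstacle I expect is the passage through $p_{1,a}$: Lemma \ref{lem:cm} computes $\cN_{1,a}$ only to $\cO(\epsilon_1^2)$, so I would have to verify that its image under $\kappa_{12}$ deposits $n_{2,a}(t)$ firmly inside the basin used by the Lyapunov/monotonicity step in $\kappa_2$, and that the associated time window is compatible with the chart change $r_2=r_1\epsilon_1^{1/(s+1)}$ uniformly in $\epsilon$. Once this matching estimate is in hand, the rest of the proof is bookkeeping of the chart formulae from Lemma \ref{lem:kappa_maps}.
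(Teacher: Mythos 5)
Your proposal follows the same route as the paper's proof: Fenichel's theorem for the initial piece, the projective transformation $\rho$, the extension through chart $\kappa_1$ via the center-stable manifold of Lemma \ref{lem:extend}, the loss of normal hyperbolicity in $\kappa_2$ via Lemma \ref{lem:not_hyp}, and the blow-down to read off the scaling from the $\kappa_2$ rescaling $(v,y)=(\epsilon^{1/(s+1)}v_2,\epsilon^{s/(s+1)}y_2)$ together with $x=1/v$. It is in fact somewhat more detailed than the paper's own (very terse) argument, and the chart-matching issue you flag is likewise left implicit there.
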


\begin{proof}
We work in projective coordinates via $\rho$. We can restrict to pieces of the critical manifold lying in the positive quadrant without loss of generality. $\cM_\epsilon$ is obtained from $\cM_0$ by Fenichel's Theorem since $\epsilon>0$ is sufficiently small. Using the blow-up transformation and Lemma \ref{lem:extend}, it follows that $\cM_\epsilon$ extends to a normally hyperbolic manifold in the chart $\kappa_1$. By Lemma \ref{lem:not_hyp} the extension is not normally hyperbolic in the chart $\kappa_2$. By Lemma \ref{lem:kappa_maps} and Lemma \ref{lem:kappa_flows} it follows that the neighborhood of $(v,y)=(0,0)$ scales as 
\be
\label{eq:sregion}
(v,y)=\left(\epsilon^{1/(s+1)}v_2,\epsilon^{s/(s+1)}y_2\right).
\ee
By applying a blow-down transformation it follows that $\cM_\epsilon$ is normally hyperbolic up to a scaling region given by \eqref{eq:sregion} and not normally hyperbolic inside some larger domain. Via $\rho^{-1}$ we get $(x,y)=\left(\cO(\epsilon^{-1/(s+1)}),\cO(\epsilon^{s/(s+1)})\right)$ as $\epsilon\ra0$ in original coordinates so that the result follows. 
\end{proof}

\section{Smaller Regions}
\label{sec:optimality}

To prove the final result, we aim to strengthen Proposition \ref{prop:main1} as it claims that the slow manifold extends and then normal hyperbolicity breaks down for the scaling \eqref{eq:sregion} using the blow-up \eqref{eq:bu1choice}-\eqref{eq:bu2choice}. A priori, we could have chosen different exponents for the blow-up which could have allowed us to extend the slow manifold even further. Here we show that the exponents in \eqref{eq:sregion} are indeed optimal in the sense that for any pair $(\alpha_1,\alpha_2)$, $\alpha_i\geq0$ with $0<\alpha_1+\alpha_2$ the manifold $\cC_\epsilon$ is not normally hyperbolic in the larger region
\benn
(x,y)=\left(\cO(\epsilon^{-(1+\alpha_1)/(s+1)}),\cO(\epsilon^{(s+\alpha_2)/(s+1)})\right),\qquad  \text{as $\epsilon\ra 0$.}
\eenn
Note that it suffices to assume that $0<\alpha_1+\alpha_2\ll1$ so that the region is slightly larger. Consider the modified blow-up 
\be
\label{eq:second_bu}
v=\bar{r}^{1+\alpha_1}\bar{v},\qquad y=\bar{r}^{s+\alpha_2}\bar{y},\qquad \epsilon=\bar{r}^{s+1}\bar{\epsilon}.
\ee
Observe that in the chart $\kappa_2$ the blow-up \eqref{eq:second_bu} reduces to the re-scaling
\benn
v=\epsilon^{(1+\alpha_1)/(s+1)}v_2,\qquad y=\epsilon^{(s+\alpha_2)/(s+1)}y_2
\eenn
which yields a strictly smaller scaling region than before. \medskip

\begin{lem}
In the chart $\kappa_1$ the vector field is 
\be
\label{eq:2nd_bu_k1}
\begin{array}{lcl}
r_1'&=&\frac{\mu}{s+\alpha_2}r_1^{2-\alpha_2+s+s\alpha_1}\epsilon_1v_1^s,\\
v_1'&=& \frac{-\mu(1+\alpha_1)}{s+\alpha_2}r_1^{1-\alpha_2+s+s\alpha_1}\epsilon_1v_1^{s+1}+r_1^{1+\alpha_1+s}v_1^2(r_1^{\alpha_2}-r_1^{s\alpha_1}v_1^s),\\
\epsilon_1&=& \frac{-(s+1)\mu}{s+\alpha_2}r_1^{1-\alpha_2+s+s\alpha_1}\epsilon_1^2v_1^s,\\
\end{array}
\ee
\end{lem}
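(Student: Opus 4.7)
The plan is to mirror exactly the derivation of Lemma \ref{lem:kappa_flows}, but with the modified weights coming from \eqref{eq:second_bu}. In chart $\kappa_1$ we set $\bar y = 1$, which produces the chart substitutions
\benn
v = r_1^{1+\alpha_1}v_1, \qquad y = r_1^{s+\alpha_2}, \qquad \epsilon = r_1^{s+1}\epsilon_1.
\eenn
Since the right-hand sides of \eqref{eq:main_local2} are polynomial in $(v,y,\epsilon)$, the induced vector field in $(v_1, r_1, \epsilon_1)$ is obtained purely by chain-rule differentiation of these three identities, followed by solving a linear algebraic system for $(v_1', r_1', \epsilon_1')$.

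The order of operations I would follow minimizes the algebra. First, since $y$ depends only on $r_1$, differentiating $y = r_1^{s+\alpha_2}$ and using $y' = \epsilon\mu v^s$ from \eqref{eq:main_local2} yields $(s+\alpha_2)r_1^{s+\alpha_2 - 1}r_1' = \mu\, r_1^{2s+1 + s\alpha_1}\epsilon_1 v_1^s$, which upon dividing produces the stated expression for $r_1'$. Next, using $\epsilon' = 0$ together with $\epsilon = r_1^{s+1}\epsilon_1$ gives $\epsilon_1' = -(s+1)r_1^{-1}r_1'\epsilon_1$, into which I substitute the formula for $r_1'$ just derived to finish the $\epsilon_1$-component. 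Finally, for $v_1'$ I differentiate $v = r_1^{1+\alpha_1}v_1$, equate to $v^2(y - v^s) = r_1^{2+2\alpha_1}v_1^2\bigl(r_1^{s+\alpha_2} - r_1^{s + s\alpha_1}v_1^s\bigr)$, move the term $(1+\alpha_1)r_1^{\alpha_1}r_1' v_1$ arising from the product rule to the right-hand side, and divide through by $r_1^{1+\alpha_1}$; the two resulting terms match the two summands in the stated formula for $v_1'$.

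No step is conceptually hard; the main obstacle is simply careful bookkeeping of the $r_1$-exponents, which now depend on $s$, $\alpha_1$, and $\alpha_2$ in an asymmetric way. I emphasize one structural difference from Lemma \ref{lem:kappa_flows}: there, after the analogous computation, a common factor $r_1^{s+1}$ could be cancelled and time rescaled to obtain a fully desingularized polynomial vector field on the blow-up locus $r_1 = 0$. Here, for $(\alpha_1,\alpha_2) \neq (0,0)$, the exponents of $r_1$ appearing in the equations for $r_1'$, $v_1'$, and $\epsilon_1'$ no longer all agree, so no uniform cancellation is possible. This is a feature rather than a bug: the discrepancy among these exponents is precisely what the optimality argument of Section \ref{sec:optimality} will exploit to show that $\cM_\epsilon$ cannot be extended as a normally hyperbolic manifold over any strictly larger scaling region. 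Keeping the raw exponents visible in the statement of the lemma is therefore essential for the downstream comparison of dominant balances as $r_1 \to 0$.
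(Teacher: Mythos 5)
Your computation is correct: each of the three exponents and prefactors checks out against a direct chain-rule differentiation of the chart substitutions $v=r_1^{1+\alpha_1}v_1$, $y=r_1^{s+\alpha_2}$, $\epsilon=r_1^{s+1}\epsilon_1$, and your route (solve for $r_1'$ from the $y$-equation first, then back-substitute into the $\epsilon_1'$ and $v_1'$ equations) is exactly the one the paper uses in its proof of Lemma \ref{lem:kappa_flows}, which is the template this unproved lemma implicitly relies on. Your closing observation that the $r_1$-exponents no longer admit a common cancellation when $(\alpha_1,\alpha_2)\neq(0,0)$ correctly anticipates the mechanism of Proposition \ref{prop:main2}.
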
\medskip

\begin{prop}
\label{prop:main2}
The extension of the manifold $\cS_\epsilon$ in the chart $\kappa_1$ for the blow-up \eqref{eq:second_bu} is not normally hyperbolic as $r_1\ra0$. 
\end{prop}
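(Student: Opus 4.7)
My plan parallels the eigenvalue computation of Lemma \ref{lem:eqpt}, but carried out along the full lift of $\cS_0$ in the chart $\kappa_1$ under the modified blow-up \eqref{eq:second_bu}. The goal is to show that no time rescaling by a power of $r_1$ can simultaneously keep the vector field smooth at $r_1=0$ and keep the transverse normal rate bounded away from $0$.

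First I lift $\cS_0=\{y=v^s\}$. Substituting $v=r_1^{1+\alpha_1}v_1$ and $y=r_1^{s+\alpha_2}$ into $y=v^s$ gives the curve
\[
 \cS_{1,a}^{(\alpha)}=\bigl\{(v_1,r_1,0):v_1=r_1^{(\alpha_2-s\alpha_1)/s}\bigr\}\subset\{\epsilon_1=0\}.
\]
Unlike the optimal case, where the lift is the line $\{v_1=1\}$, this curve exits every compact subset of $\kappa_1\cap\{v_1>0\}$ as $r_1\to 0$ whenever $\alpha_2\neq s\alpha_1$. The transverse linearization of \eqref{eq:2nd_bu_k1} is then evaluated along $\cS_{1,a}^{(\alpha)}$: the two polynomial contributions $r_1^{1+\alpha_1+s+\alpha_2}v_1^2$ and $-r_1^{1+\alpha_1+s+s\alpha_1}v_1^{s+2}$, after inserting $v_1^s=r_1^{\alpha_2-s\alpha_1}$, collapse to the same power of $r_1$ with coefficients $2$ and $-(s+2)$, while the $\epsilon_1$-term contributes nothing on $\{\epsilon_1=0\}$. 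The expected outcome is
\[
 \partial_{v_1}v_1'\bigl|_{\cS_{1,a}^{(\alpha)}}=-s\,r_1^{1+s+\alpha_2(s+1)/s},
\]
which is the unrescaled normal rate of the fast direction.

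The heart of the proof is then a two-sided inequality on the rescaling exponent. Any desingularization amounts to dividing the vector field by $r_1^P$; for the $\epsilon_1$-equation, whose $r_1$-power is $1-\alpha_2+s+s\alpha_1$, to extend smoothly to $r_1=0$ one needs $P\leq 1-\alpha_2+s+s\alpha_1$, while for the transverse rate to remain bounded away from $0$ one needs $P\geq 1+s+\alpha_2(s+1)/s$. Combining these forces $s^2\alpha_1\geq (2s+1)\alpha_2$, which under the standing assumption $\alpha_1,\alpha_2\geq 0$, $\alpha_1+\alpha_2>0$ excludes every pair except those with $\alpha_2$ much smaller than $\alpha_1$. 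In all remaining admissible $(\alpha_1,\alpha_2)$ the transverse eigenvalue must vanish at $r_1=0$ and the Fenichel extension of $\cS_\epsilon$ is not normally hyperbolic. The leftover regime $\alpha_2<s^2\alpha_1/(2s+1)$ automatically satisfies $\alpha_2-s\alpha_1<0$, so $\cS_{1,a}^{(\alpha)}$ escapes to $v_1=+\I$ as $r_1\to 0$; the slow manifold $\cS_\epsilon$ then leaves every compact subset of $\kappa_1$ before reaching the blow-up sphere and hence cannot supply a normally hyperbolic extension in that chart either.

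The main technical obstacle I anticipate is the bookkeeping of the three competing $r_1$-powers $\{1-\alpha_2+s+s\alpha_1,\,1+\alpha_1+s+\alpha_2,\,1+\alpha_1+s+s\alpha_1\}$ in the $v_1'$-equation: depending on the sign of $\alpha_1(s-1)-2\alpha_2$ the dominant contribution among them switches, and one must check that the cancellation producing the coefficient $-s$ is robust under this switch and that no additional subleading term spoils the conclusion. Organising the argument around the single combined exponent $s^2\alpha_1-(2s+1)\alpha_2$, whose only nonnegative zero is $(0,0)$, is how I expect to turn the eigenvalue computation into the optimality statement that the blow-up of Section \ref{sec:blowup} is sharp.
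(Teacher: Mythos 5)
Your core computation agrees with the paper's: both arguments lift $\cS_0$ to the curve $v_1=r_1^{(\alpha_2-s\alpha_1)/s}$ inside $\{\epsilon_1=0\}$ and observe that the two contributions to $\partial_{v_1}v_1'$ collapse on this curve to the single term $-s\,r_1^{1+s+\alpha_2(s+1)/s}$ (your coefficient $-(s+2)$ for the second term is in fact the correct one; the paper's $(s+1)$ is a harmless typo, since only the exponent matters for the conclusion). Where you genuinely differ is in how the conclusion is extracted. The paper splits on the sign of $s\alpha_1-\alpha_2$, exhibits one admissible desingularization exponent in each case, and checks that the variational factor retains a power of $r_1$; you instead quantify over \emph{all} rescalings $r_1^{-P}$, derive the two-sided constraint $1+s+\alpha_2(s+1)/s\le P\le 1-\alpha_2+s+s\alpha_1$, conclude infeasibility unless $s^2\alpha_1\ge(2s+1)\alpha_2$, and dispose of the leftover regime by noting that there $\alpha_2<s\alpha_1$, so the lifted curve escapes to $v_1=+\I$ as $r_1\ra 0$. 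This buys a cleaner optimality statement (every rescaling either destroys smoothness at $r_1=0$ or sends the transverse rate to zero) and, in the regime $s\alpha_1>\alpha_2$, a more transparent reason for failure than the paper's terse remark: with the paper's choice $\alpha^*=1+s+\min(\alpha_1+\alpha_2,s\alpha_1-\alpha_2)$ the desingularized variational factor can carry a \emph{negative} power of $r_1$ (e.g.\ $\alpha_2=0$), so the equilibrium curve running off the chart is the honest geometric explanation. Two small repairs: the smoothness constraint $P\le(\cdot)$ should in principle be taken over all terms of all three equations of \eqref{eq:2nd_bu_k1}, not only the $\epsilon_1$-equation --- using only the $\epsilon_1$-equation yields a weaker upper bound, so your infeasibility conclusion survives, but this should be stated; and your closing claim that $s^2\alpha_1-(2s+1)\alpha_2$ has $(0,0)$ as its only nonnegative zero is false (the zero set is a ray), though nothing in the actual argument rests on it because that regime is covered by the escape argument.
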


\begin{proof}
Suppose first that $s\alpha_1-\alpha_2\leq 0$. Since $0<\alpha_1+\alpha_2\ll1$ we can desingularize the vector field \eqref{eq:2nd_bu_k1} by $r_1^{1-\alpha_2+s+s\alpha_1}$ as $1-\alpha_2+s+s\alpha_1>0$. Considering the invariant subspace defined by $\epsilon_1=0$ of the desingularized vector field yields the equation 
\be
\label{eq:sapp1}
v_1'= r_1^{\alpha_1+\alpha_2-s\alpha_1}v_1^2(r_1^{\alpha_2}-r_1^{s\alpha_1}v_1^s)
\ee
which has a line of equilibria given by $v_1=r_1^{(\alpha_2-s\alpha_1)/s}$ corresponding to the critical manifold $\cS_0$. Linearizing \eqref{eq:sapp1} around this line gives the variational equation
\benn 
V'=r_1^{\alpha_1+\alpha_2-s\alpha_1}(2r_1^{(\alpha_2-s\alpha_1)/s}r_1^{\alpha_2}-(s+1)r_1^{s\alpha_1}r_1^{(\alpha_2-s\alpha_1)(s+1)/s})V
\eenn
which reduces to $V'=0\cdot V$ as $r_1\ra0$. Hence, in this case, the extension of $\cS_\epsilon$ is not normally hyperbolic in all of $\kappa_1$. In the case $s\alpha_1-\alpha_2>0$ we can desingularize by $r_1^{\alpha^*}$ where $\alpha^*=1+s+\min(\alpha_1+\alpha_2,s\alpha_1-\alpha_2)$ in which case again a multiplicative factor containing $r_1$ appears in the variational equation.
\end{proof}\medskip

The main point of the previous proof is that we cannot remove the $r_1$-dependence in the the chart $\kappa_1$ which means that there is no center manifold at the point $p_{1,a}$ extending up to the sphere $\cS^2$, as previously for the blow-up with $\alpha_1=0=\alpha_2$.

\section{The Main Result and Conclusions}
\label{sec:main}

Finally, we can combine the previous result to obtain the main result about loss of normal hyperbolicity and scaling. Recall again that we always restrict the dynamics to $(\R_0^+)^2\cap \cH^1$ which is a subset in the non-negative quadrant bounded away from $x=0$.\medskip 

\begin{thm}
\label{thm:main}
Consider the family of planar fast-slow systems
\be
\label{eq:base_model_fss_main_final} 
\begin{array}{rcl}
x'&=& 1-x^sy,\\
y'&=& \epsilon \mu.\\
\end{array}
\ee
for $s\in\N$, $\mu\neq0$ and $\epsilon>0$ sufficiently small. Let $\cM_0\subset \cC_0$ be a compact submanifold of the critical manifold. Then $\cM_\epsilon$ extends to a normally hyperbolic manifold of \eqref{eq:base_model_fss_main_final} up to a domain of size 
\be
\label{eq:main_scale}
(x,y)=\left(\cO(\epsilon^{-1/(s+1)}),\cO(\epsilon^{s/(s+1)})\right),\qquad  \text{as $\epsilon\ra 0$.}
\ee
under the flow of \eqref{eq:base_model_fss_main_final}. $\cM_\epsilon$ is not normally hyperbolic for any larger domain {i.e.}~there does not exist a normally hyperbolic invariant manifold $\tilde{\cM}_\epsilon$ in a domain strictly larger than \eqref{eq:main_scale} as $\epsilon \ra 0$ such that $\cM_\epsilon$ is strictly contained in $\tilde{\cM}_\epsilon$.  
\end{thm}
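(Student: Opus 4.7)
The statement is essentially the conjunction of Propositions \ref{prop:main1} and \ref{prop:main2}, so my plan is to synthesize the two. First I would pass to the projective chart via $\rho$ so that the unbounded manifold $\cC_0$ becomes the local object $\cS_0 = \{y = v^s\}$ near the degenerate singularity at $(v,y)=(0,0)$; because $\rho$ is a diffeomorphism on $\cH^1$, normal hyperbolicity is preserved under the change of variables and it is enough to work on the $(v,y)$ side. Then, as $\epsilon>0$ is sufficiently small, Fenichel's theorem produces $\cM_\epsilon$ from any compact $\cM_0$ and I only need to follow its extension under the flow toward $(v,y)=(0,0)$.

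Next I would invoke the blow-up analysis of Sections \ref{sec:blowup}--\ref{sec:blowup2}. Lemma \ref{lem:extend} gives the center-stable manifold $\cM_{1,a}$ through $p_{1,a}$ in chart $\kappa_1$, so $\cM_\epsilon$ continues as a normally hyperbolic object through all of $\kappa_1$ and enters $\kappa_2$ along the trajectory $n_{2,a}(t)$ obtained via $\kappa_{12}$. Lemma \ref{lem:not_hyp} then says the continuation $\cM_{2,a}$ is not normally hyperbolic everywhere in $\kappa_2$: the variational equation along $n_{2,a}(t)$ is asymptotically autonomous with a double zero eigenvalue as $t\to+\infty$. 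Blowing down via $\kappa_2^{-1}$ and then $\rho^{-1}$ recovers the scaling \eqref{eq:main_scale} and establishes the extension part, which is exactly the content of Proposition \ref{prop:main1}.

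For the optimality (the ``not normally hyperbolic for any larger domain'' clause) I would argue by contradiction: suppose there were a normally hyperbolic invariant manifold $\tilde{\cM}_\epsilon$ strictly containing $\cM_\epsilon$ in a domain strictly larger than \eqref{eq:main_scale} as $\epsilon \to 0$. Any such strictly larger domain corresponds, in projective coordinates and after a weighted rescaling, to choosing a modified blow-up of the form \eqref{eq:second_bu} with exponents $(1+\alpha_1, s+\alpha_2, s+1)$ for some $(\alpha_1,\alpha_2)$ with $\alpha_i\ge 0$ and $0<\alpha_1+\alpha_2\ll 1$; in this modified blow-up the chart-$\kappa_2$ box becomes strictly smaller while $\tilde{\cM}_\epsilon$ must, by normal hyperbolicity, still admit a smooth graph extension across $\kappa_1$ and match $\tilde{\cM}_\epsilon\cap\kappa_2$ via $\kappa_{12}$. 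But Proposition \ref{prop:main2} shows precisely that no such extension can exist, because the variational equation in $\kappa_1$ retains an unremovable factor of $r_1$ to a positive power, so its spectrum along $\cS_\epsilon$ degenerates to $0$ as $r_1\to 0$ and a normally hyperbolic center manifold through $p_{1,a}$ cannot be constructed.

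The main obstacle I anticipate is the last step, namely the translation ``$\tilde{\cM}_\epsilon$ lives in a strictly larger domain'' $\Longrightarrow$ ``$\tilde{\cM}_\epsilon$ would correspond to a modified blow-up of type \eqref{eq:second_bu}.'' One has to verify that any slight enlargement of the scaling box can indeed be absorbed into a choice $(\alpha_1,\alpha_2)$ with $\alpha_1+\alpha_2>0$ and that the requirement of normal hyperbolicity for $\tilde{\cM}_\epsilon$ pulls back, under the composition of $\rho$ and the modified blow-up, to a \emph{uniform} (in $\bar r$) hyperbolic splitting of the tangent bundle of the transformed invariant manifold near $p_{1,a}$. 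Once this compatibility is in place, Proposition \ref{prop:main2} closes the argument directly, and combining with the positive direction from Proposition \ref{prop:main1} gives Theorem \ref{thm:main}.
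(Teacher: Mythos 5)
Your proposal matches the paper's proof exactly: Theorem \ref{thm:main} is proven there in a single line by citing Proposition \ref{prop:main1} and Proposition \ref{prop:main2}, which is precisely the synthesis you describe. The obstacle you flag at the end --- that a ``strictly larger domain'' must be shown to correspond to a modified blow-up of the form \eqref{eq:second_bu} --- is a genuine subtlety, but the paper leaves it at the same level of detail (its optimality claim is, strictly speaking, relative to the family of weighted blow-ups considered), so your treatment is no less complete than the original.
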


\begin{proof}
Apply Proposition \ref{prop:main1} and Proposition \ref{prop:main2}. 
\end{proof}\medskip

Theorem \ref{thm:main} immediately applies to the 2D autocatalator. It is very interesting to point out the relation between the approach by Gucwa and Szmolyan \cite{GucwaSzmolyan} and the general Theorem \ref{thm:main}. The scaling used in \cite{GucwaSzmolyan} to capture the large relaxation-type oscillation for the autocatalator is $x=\tilde{x}/\epsilon$. Theorem \ref{thm:main} states that the slow manifold loses normal hyperbolicity when $x=\cO(\epsilon^{-1/2})$. This scaling certainly does not exclude the possibility of global crossings of the critical manifold when $x=\cO(\epsilon^{-1})$. In particular, the relaxation loop shown in Figure \ref{fig:1}(a) jumps near a fold point of the critical manifold 'towards infinity' but it has a travel time so that it only turns around for $x=\cO(\epsilon^{-1})$. This yields one possible explanation why two consecutive blow-ups are needed in \cite{GucwaSzmolyan} as the scaling $x=\tilde{x}/\epsilon$ compresses two important scaling regions for the global return simultaneously.\medskip

For the model \eqref{eq:Rankin} by Rankin et {al.}~the decay rate of the critical manifold is exponential and hence faster than any polynomial. In this context, Theorem \ref{thm:main} implies that a compact submanifold of the critical manifold will loose normal hyperbolicity for arbitrarily large choices of $s\in\N$ in \eqref{eq:main_scale}. Hence the scaling is expected to be $x,y=\cO(1)$, or at least close to this scaling for all practical purposes, so that normal hyperbolicity is lost in a very small region independent of $\epsilon$.\medskip

As an important conclusion from Theorem \ref{thm:main} one may also understand the behaviour of large amplitude oscillations (LAOs) induced by a global return mechanism with the asymptotic dynamics given by \eqref{eq:base_model_fss_main_final} {i.e.}~those LAOs which cross the critical manifold. For example, given a general autocatalytic reaction mechanism $Y+(s+1)X\ra (s+2)X$ then the lower bound for the crossing is $\cO(\epsilon^{-1/(s+1)})$ for the fast component amplitude. Since $s$ is usually known and $\epsilon$ can be computed from the reaction rates the result has quite general applicability.\medskip 

\textbf{Acknowledgments:} I would like to thank the European Commission (EC/REA) for support by a Marie-Curie International Re-integration Grant. I also acknowledge support via an APART fellowship of the Austrian Academy of Sciences ({\"{O}AW}).

\end{document}